\numberwithin{equation}{section}
\theoremstyle{plain}
\newtheorem{thm}{Theorem}[section]
\newtheorem{cor}[thm]{Corollary} 
\newtheorem{lemma}[thm]{Lemma} 
\newtheorem{prop}[thm]{Proposition}
\newtheorem{hyp}[thm]{Hypothesis}
\theoremstyle{remark}
\newtheorem{remark}[thm]{Remark}
\theoremstyle{definition}
\newtheorem{defi}[thm]{Definition}
\newcommand{\la}{\lambda}
\newcommand{\al}{\alpha}
\newcommand{\De}{\Delta}
\newcommand\Rfr{{\mathfrak R}}
\newcommand\Mcal{{\mathcal{M}}} 
\newcommand\Reals{{\mathbb R}}
\newcommand\Complex{{\mathbb C}}
\newcommand\Nats{{\mathbb N}}
\newcommand{\BH}{\mathcal{B}(\mathcal{H})}
\newcommand{\Hcal}{\mathcal{H}}
\newcommand{\st}{\,:\,}
\renewcommand{\i}{\text{\rm i}}
\newcommand{\norm}[1]{\left\Vert#1\right\Vert}
\newcommand\ncs[1]{{\mathcal{L}_#1(\Mcal,\tau)}}
\newcommand{\dd}[3]{\De_{\la_1,\dots,#1}^{(#2)}\left(#3\right)}
\begin{document}

\title{Multiple operator integrals and spectral shift$^{*}$}

\author[Skripka]{Anna Skripka}
\address{Department of Mathematics, University of Central Florida, 4000 Central Florida
  Blvd., P.O.\ Box 161364, Orlando, FL 32816-1364, USA}
\email{skripka@math.ucf.edu}
\thanks{\footnotesize $^{*}$Research supported in part by NSF grant DMS-0900870}

\subjclass[2000]{Primary 47A55, 47A56; secondary 46L52}

\keywords{Multiple operator integral, spectral shift function.}

\date{30 July, 2009}


\begin{abstract}
Multiple scalar integral representations for traces of operator derivatives are obtained and applied in the proof of existence of the higher order spectral shift functions.
\end{abstract}

\maketitle

\section{Introduction}

For a large class of admissible functions $f:\Reals\mapsto\Complex$, the operator derivatives
$\frac{d^j}{dx^j}f(H_0+xV)$, where $H_0$ and $V$ are self-adjoint operators on a separable Hilbert space $\Hcal$, exist and can be represented as multiple operator integrals \cite{Azamov0,PellerMult}. We explore properties of operator derivatives inside a semi-finite normal faithful trace $\tau$ given on a semi-finite von Neumann algebra $\Mcal$
acting on $\Hcal$.

For $H_0=H_0^*$  affiliated with $\Mcal$ and $V=V^*$ in the $\tau$-Hilbert-Schmidt class $\ncs{2}$ (that is, $V\in\Mcal$ and $\tau(|V|^2)<\infty$), we represent the traces of the derivatives $\tau\left[\frac{d^j}{dx^j}f(H_0+xV)\right]$ as multiple scalar integrals, and, subsequently, as a distribution on $f^{(j)}$, which is essentially a derivative of an $L^\infty$-function (see Theorem \ref{prop:two} and Corollary \ref{prop:two_cor}). We also obtain that the order of an operator derivative inside the trace can be decreased, which costs the increase of the order of a scalar derivative;  more precisely,
\[\tau\left[\frac{d^j}{dx^j}f(H_0+xV)\right]=\tau\left[V\frac{d^{j-1}}{dx^{j-1}}f'(H_0+xV)\right]\] (see Corollary \ref{prop:chain}).
The obtained representations for $\tau\left[\frac{d^j}{dx^j}f(H_0+xV)\right]$ are applied in derivation of explicit formulas for the remainders of non-commutative Taylor-type approximations (described below) in Section \ref{sec:proofs}.

Let $R_p(f)\equiv R_{p,H_0,V}(f)$ denote the remainder of the Taylor-type approximation
\begin{align*}
f(H_0+V)-\sum_{j=0}^{p-1}\frac{1}{j!}\frac{d^j}{dx^j}\bigg|_{x=0}f(H_0+xV)
\end{align*}
of the value of $f(H_0+V)$ at the perturbed operator $H_0+V$ by data determined by the initial operator $H_0$.
Let $\mathcal{W}_p(\Reals)$ denote the set of functions $f\in C^p(\Reals)$ such that for
each $j=0,\dots,p$, the derivative $f^{(j)}$ equals the Fourier transform
$\int_\Reals e^{\i t\la}\,d\mu_{f^{(j)}}(\la)$ of a finite Borel measure $\mu_{f^{(j)}}$.
There exist functions $\xi\equiv\xi_{H_0+V,H_0}$ and $\eta\equiv\eta_{H_0,H_0+V}$, called Krein's and Koplienko's spectral shift functions, respectively, such that when $\tau(|V|)<\infty$,
\begin{align}\label{f-la:4}
\tau[R_1(f)]=\int_\Reals f'(t)\xi(t)\,dt,
\end{align} for $f\in\mathcal{W}_1(\Reals)$ \cite{Krein1} (see also \cite{Azamov,Carey,Lifshits,PellerKr}), and when $\tau(|V|^2)<\infty$,
\begin{align}\label{f-la:5}
\tau[R_2(f)]=\int_\Reals f''(t)\eta(t)\,dt,
\end{align} for $f\in\mathcal{W}_2(\Reals)$ \cite{Kop84} (see also \cite{ds,Neidhardt,PellerKo,convexity}).

It was conjectured in \cite{Kop84} that for $V$ in the Schatten $p$-class, $p\geq 3$, and
$\Mcal=\BH$ (the algebra of bounded operators on $\Hcal$), there exists a real Borel measure $\nu_p\equiv\nu_{p,H_0,V}$, with the total variation bounded by $\frac{\tau(|V|^p)}{p!}$, such that
\begin{align}\label{f-la:7}
\tau[R_p(f)]=\int_\Reals f^{(p)}(t)\,d\nu_p(t),\end{align} for bounded
rational functions $f$. A proof of \eqref{f-la:7} was also suggested in
\cite{Kop84}, but, unfortunately, it contained a mistake (see \cite{ds} for details).

It was proved in \cite[Theorem 5.1]{ds} that \eqref{f-la:7} holds for $f\in\mathcal{W}_p(\Reals)$ when $V$ is in the Hilbert-Schmidt class and $\Mcal=\BH$, with $\nu_p$ a real Borel measure whose total variation is bounded by $\frac{\tau(|V|^2)^{p/2}}{p!}$.
It was shown in \cite{ds} and \cite{unbdds} that $\nu_p$ is absolutely continuous for a bounded and unbounded $H_0$, respectively. Moreover, an explicit formula for the density of $\nu_p$, called the spectral shift function of order $p$, was derived in \cite{ds,unbdds} (see, e.g., \eqref{f-la:nup2} of Theorem \ref{prop:finite}). The trace formula \eqref{f-la:7} was also obtained in the case of $\Mcal$ a general semi-finite von Neumann algebra and $p=3$ \cite[Theorem 5.2]{ds}, with $\nu_3$ absolutely continuous when $H_0$ is bounded. When $H_0$ is unbounded, the trace formula \eqref{f-la:7} with an absolutely continuous measure $\nu_3$ was established in \cite{unbdds} for a set of functions $f$ disjoint from the one assured by the part of \cite[Theorem 5.2]{ds} for an unbounded $H_0$ (this discrepancy is explained in Remark \ref{rem:unbdd_det}).

The proof of existence of the measure $\nu_p$ in \cite{ds} relied on iterated operator integration techniques, while the proofs of the absolute continuity of $\nu_p$ in \cite{ds,unbdds} on analytic function theory techniques. By utilizing the results on operator derivatives and divided differences of Sections \ref{sec:mult} and  \ref{sec:dd}, respectively, we obtain a simple proof of positivity of $\nu_2$ (see Section \ref{sec:mult}), a more direct, unified, proof of the established trace formula \eqref{f-la:7} and the absolute continuity of $\nu_p$ (see Section \ref{sec:proofs}). We also obtain a new representation for the density of $\nu_p$ (see \eqref{f-la:nup1} of Theorem \ref{prop:finite}) and, in the case of a general $\Mcal$ and unbounded $H_0$, extend \eqref{f-la:7} with an absolutely continuous measure $\nu_3$ to a larger (as compared to \cite{unbdds}) set of functions $f$ (see Theorem \ref{prop:infinite}). The ``spectral shift" meaning of the density of $\nu_p$ is demonstrated on an example of commuting operators in a finite von Neumann algebra in Section \ref{sec:proofs}.

\section{Divided differences and splines}

\label{sec:dd}

In this section we collect facts on divided differences and splines to be used in the sequel.

\begin{defi}\label{prop:dddef}
The divided difference of order $p$ is an operation on functions $f$ of one
(real) variable, which we will usually call $\la$, defined recursively as
follows:
\begin{align*}
&\Delta^{(0)}_{\la_1}(f):=f(\la_1),\\
&\dd{\la_{p+1}}{p}{f}:=
\begin{cases}
\frac{\Delta^{(p-1)}_{\la_1,\dots,\la_{p-1},\la_p}(f)-
\Delta^{(p-1)}_{\la_1,\dots,\la_{p-1},\la_{p+1}}(f)}{\la_p-\la_{p+1}}&
\text{ if } \la_p\neq\la_{p+1}\\[2ex]
\frac{\partial}{\partial t}\big|_{t=\la_p}\Delta^{(p-1)}_{\la_1,\dots,\la_{p-1},t}(f)&
\text{ if } \la_p=\la_{p+1}.
\end{cases}
\end{align*}
\end{defi}

The following facts are well-known.

\begin{prop}
\label{prop:dds}
\begin{enumerate}
\item\label{f-la:sim} $($See \cite[Section 4.7, (a)]{Vore}.$)$\\ $\dd{\la_{p+1}}{p}{f}$
    is symmetric in $\la_1,\la_2,\dots,\la_{p+1}$.

\item\label{f-la:dd3}
$($See \cite[Section 4.7]{Vore}.$)$
For $f$ a sufficiently smooth function,
\[\dd{\la_{p+1}}{p}{f}=\sum_{i\in
\mathcal{I}}\sum_{j=0}^{m(\la_i)-1}c_{ij}(\la_1,\dots,\la_{p+1})
f^{(j)}(\la_i).\] Here $\mathcal{I}$ is
the set of indices $i$ for which $\la_i$ are distinct, $m(\la_i)$
is the multiplicity of $\la_i$, and $c_{ij}(\la_1,\dots,\la_{p+1})\in\Complex$.

In particular, if all points $\la_1,\dots,\la_{p+1}$ are distinct, then
\[\dd{\la_{p+1}}{p}{f}=\sum_{j=1}^{p+1}\frac{f(\la_j)}{\prod_{k\in\{1,\dots,p+1\}\setminus\{j\}}
(\la_j-\la_k)}.\]

\item\label{f-la:dd2}
$($See \cite[Section 4.7]{Vore}.$)$\\
$\dd{\la_{p+1}}{p}{a_p\la^p+a_{p-1}\la^{p-1}+\dots +a_1\la+a_0}=a_p$, where
$a_0,a_1,\dots,a_p\in\Complex$.


\item \label{f-la:dd1} $($See \cite[Theorem 6.2 and Theorem 6.3]{Vore}.$)$
For $f\in C^p[a,b]$, the function
\[[a,b]^{p+1}\ni(\la_1,\dots,\la_{p+1})\mapsto\dd{\la_{p+1}}{p}{f}\] is continuous.

\end{enumerate}
\end{prop}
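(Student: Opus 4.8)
The plan is to prove each of the four claimed facts by working directly from the recursive Definition 2.1, exploiting the structure of the recursion rather than invoking the cited references as black boxes. Since these are standard results, the guiding principle is that symmetry and the closed-form representations reinforce one another, so I would prove them in an order that lets earlier parts feed later ones.

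First I would establish part (1), symmetry in all $p+1$ arguments. The natural approach is induction on $p$, but the subtlety is that the recursion in Definition 2.1 only manifestly exhibits symmetry in the last two variables $\la_p,\la_{p+1}$ (the difference quotient is visibly symmetric under swapping them, and the limiting derivative case is the continuous extension across $\la_p=\la_{p+1}$). To get full symmetry I would first prove the distinct-points formula of part (2) for the case where all $\la_j$ are distinct, namely
\begin{align*}
\dd{\la_{p+1}}{p}{f}=\sum_{j=1}^{p+1}\frac{f(\la_j)}{\prod_{k\neq j}(\la_j-\la_k)},
\end{align*}
by induction on $p$ using the difference-quotient branch of the recursion; this formula is patently symmetric in $\la_1,\dots,\la_{p+1}$. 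Then symmetry on the dense open set of distinct points, together with the continuity asserted in part (4), extends symmetry to all configurations.

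For part (2) in full generality (the confluent case with repeated nodes) I would argue that the general linear-combination formula follows from the distinct-points formula by taking successive limits as nodes coalesce: whenever two arguments merge, the difference quotient becomes a derivative, and L'Hôpital-type limits convert factors $f(\la_i)$ into combinations of $f(\la_i),f'(\la_i),\dots,f^{(m(\la_i)-1)}(\la_i)$ with coefficients $c_{ij}$ that are rational in the remaining distinct nodes. Part (3), that the divided difference of a degree-$p$ polynomial $\sum a_k\la^k$ equals the leading coefficient $a_p$, I would obtain either by direct induction (the order-$p$ divided difference annihilates degrees $<p$ and returns $1$ on $\la^p$, which one checks on the distinct-points formula via a Vandermonde identity) or as a normalization consequence of part (2).

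The main obstacle, and the logically load-bearing step, is part (4): joint continuity of $(\la_1,\dots,\la_{p+1})\mapsto\dd{\la_{p+1}}{p}{f}$ on all of $[a,b]^{p+1}$ for $f\in C^p[a,b]$, including across the diagonal strata where nodes collide. Away from those strata continuity is immediate from the difference-quotient formula, but near a collision the denominators vanish and one must show the singularities cancel. The cleanest route is to invoke the Hermite–Genocchi integral representation,
\begin{align*}
\dd{\la_{p+1}}{p}{f}=\int_{\Sigma_p} f^{(p)}\!\left(\textstyle\sum_{i=1}^{p+1}s_i\la_i\right)\,ds,
\end{align*}
where $\Sigma_p$ is the standard simplex $\{s_i\geq 0,\ \sum s_i=1\}$; since $f^{(p)}$ is continuous and the integrand depends continuously on the parameters with the domain of integration fixed, dominated convergence gives joint continuity directly, and this representation simultaneously re-proves symmetry and the polynomial-normalization of part (3). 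Given that Definition 2.1 is the stated starting point, I would include a short induction verifying that the recursion agrees with the Hermite–Genocchi integral, after which parts (1)–(4) all drop out; this integral representation is where the real work sits, while the algebraic identities are then formal consequences.
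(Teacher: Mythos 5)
Your proposal is correct, but note that the paper itself contains no proof of Proposition \ref{prop:dds} to compare against: the four facts are stated as well known, with citations to DeVore--Lorentz (\cite[Section 4.7, (a)]{Vore} for symmetry, \cite[Section 4.7]{Vore} for the representation formulas and the polynomial identity, \cite[Theorem 6.2 and Theorem 6.3]{Vore} for continuity). So your self-contained argument is necessarily a different route, and it is the standard clean one: verify by induction on the recursion of Definition \ref{prop:dddef} that $\dd{\la_{p+1}}{p}{f}$ equals the Hermite--Genocchi simplex integral --- the difference-quotient branch is the fundamental theorem of calculus in the last barycentric coordinate, the equal-knots branch is differentiation under the integral sign, both valid for $f\in C^p[a,b]$ --- and then parts \eqref{f-la:sim}, \eqref{f-la:dd2} and \eqref{f-la:dd1} drop out by symmetry of the simplex, direct evaluation, and uniform continuity of $f^{(p)}$, respectively. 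Three small points deserve attention. First, the ordering in your opening paragraphs is circular (symmetry via continuity, continuity via Hermite--Genocchi, which re-proves symmetry); only your final restructuring, with the integral representation established first, is logically sound, and you should present it that way from the start. Second, for part \eqref{f-la:dd3} the coalescence-limit argument is only a sketch: the Taylor/L'H\^opital computation identifying the limit of the Lagrange-form sum as a combination of $f^{(j)}(\la_i)$, $j\le m(\la_i)-1$, must actually be carried out, and a direct induction on the recursion (as in the paper's own Lemma \ref{prop:dd1}, which computes exactly such a confluent case by differentiating the Lagrange form) is the easier route. Third, normalize the measure on the simplex explicitly: it must be the $p$-dimensional Lebesgue measure in the chart $(s_1,\dots,s_p)$, of total mass $1/p!$, or part \eqref{f-la:dd2} comes out with a spurious constant. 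As for what each approach buys: your argument is uniform and self-contained, but the cited \cite[Theorem 6.2 and Theorem 6.3]{Vore} are sharper than your $C^p$ version of part \eqref{f-la:dd1} --- they give continuity under relaxed smoothness tied to the knot multiplicities, a refinement the paper explicitly relies on in the remark following Lemma \ref{prop:dd2} --- so the citation retains strength that the Hermite--Genocchi route, as you set it up, does not recover.
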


We will need a more specific version of Proposition \ref{prop:dds} \eqref{f-la:dd3}.

\begin{lemma}\label{prop:dd1}
Let $f\in C^1[a,b]$ and $\la_1,\dots,\la_p$ be distinct points in $[a,b]$. Then for any $i\in\{1,\dots,p\}$,
\begin{align*}
&\De_{\la_1,\dots,\la_p,\la_i}^{(p)}{(f)}=
\frac{f'(\la_i)}{\prod_{k\in\{1,\dots,p\}\setminus\{i\}}(\la_i-\la_k)}\\
&\quad+\sum_{j\in\{1,\dots,p\}\setminus\{i\}}\frac{1}{(\la_i-\la_j)^2}
\left(\frac{f(\la_j)}{\prod_{k\in\{1,\dots,p\}\setminus\{i,j\}}(\la_j-\la_k)}-
\frac{f(\la_i)}{\prod_{k\in\{1,\dots,p\}\setminus\{i,j\}}(\la_i-\la_k)}\right)
\end{align*}
\end{lemma}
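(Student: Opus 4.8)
The plan is to exploit the symmetry of the divided difference to reduce to the derivative case of Definition \ref{prop:dddef}, and then to differentiate the closed-form expression valid for distinct nodes. First I would invoke Proposition \ref{prop:dds}\eqref{f-la:sim} to reorder the $p+1$ arguments $\la_1,\dots,\la_p,\la_i$ so that the two copies of $\la_i$ occupy the last two slots; writing $\mu_1,\dots,\mu_{p-1}$ for the points $\{\la_k:k\neq i\}$ (indices ranging in $\{1,\dots,p\}$) in any fixed order, this gives $\De_{\la_1,\dots,\la_p,\la_i}^{(p)}(f)=\De_{\mu_1,\dots,\mu_{p-1},\la_i,\la_i}^{(p)}(f)$. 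The second case of Definition \ref{prop:dddef} then yields
\[
\De_{\mu_1,\dots,\mu_{p-1},\la_i,\la_i}^{(p)}(f)=\frac{\partial}{\partial t}\Big|_{t=\la_i}\De_{\mu_1,\dots,\mu_{p-1},t}^{(p-1)}(f).
\]

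The key observation is that for $t$ in a neighborhood of $\la_i$ the $p$ nodes $\mu_1,\dots,\mu_{p-1},t$ are pairwise distinct (since $\la_i\neq\la_k$ for $k\neq i$, this even holds at $t=\la_i$ itself). Hence the ``in particular'' formula of Proposition \ref{prop:dds}\eqref{f-la:dd3} applies on this whole neighborhood and exhibits $\De_{\mu_1,\dots,\mu_{p-1},t}^{(p-1)}(f)$ as a smooth function of $t$, namely
\[
\De_{\mu_1,\dots,\mu_{p-1},t}^{(p-1)}(f)=\frac{f(t)}{\prod_{k\neq i}(t-\la_k)}+\sum_{j\neq i}\frac{f(\la_j)}{(\la_j-t)\prod_{k\neq i,j}(\la_j-\la_k)}.
\]
I would then differentiate this in $t$ and set $t=\la_i$.

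Differentiating the sum is immediate: only the factor $(\la_j-t)^{-1}$ depends on $t$, and $\frac{d}{dt}(\la_j-t)^{-1}=(\la_j-t)^{-2}$, which at $t=\la_i$ produces exactly the $f(\la_j)$-terms of the claimed identity after using $(\la_j-\la_i)^2=(\la_i-\la_j)^2$. The term requiring care is the first one: writing $P(t)=\prod_{k\neq i}(t-\la_k)$, the quotient rule gives $\frac{d}{dt}\frac{f(t)}{P(t)}=\frac{f'(t)}{P(t)}-\frac{f(t)P'(t)}{P(t)^2}$. At $t=\la_i$ the first piece is precisely the $f'(\la_i)$-term of the statement, and the remaining task is to verify that $-f(\la_i)P'(\la_i)/P(\la_i)^2$ equals $-\sum_{j\neq i}\frac{f(\la_i)}{(\la_i-\la_j)^2\prod_{k\neq i,j}(\la_i-\la_k)}$. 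I expect this bookkeeping to be the only real obstacle: using $P'(\la_i)=\sum_{j\neq i}\prod_{k\neq i,j}(\la_i-\la_k)$ and factoring $P(\la_i)=(\la_i-\la_j)\prod_{k\neq i,j}(\la_i-\la_k)$ out of $P(\la_i)^2$ term by term collapses each summand to the required shape. Adding the two contributions gives the lemma; note that only $f\in C^1[a,b]$ is used, since the distinct-node formula needs no derivatives and the single differentiation in $t$ produces $f'(\la_i)$.
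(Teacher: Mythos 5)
Your proposal is correct and takes essentially the same route as the paper's proof: there, one assumes without loss of generality that $\la_i=\la_p$ (your explicit appeal to Proposition \ref{prop:dds}\eqref{f-la:sim} makes this reordering precise), writes the distinct-node formula of Proposition \ref{prop:dds}\eqref{f-la:dd3} for $\De_{\la_1,\dots,\la_{p-1},s}^{(p-1)}(f)$, differentiates in $s$ at $s=\la_i$ via the second case of Definition \ref{prop:dddef}, and regroups. Your quotient-rule bookkeeping, using $P'(\la_i)=\sum_{j\neq i}\prod_{k\neq i,j}(\la_i-\la_k)$ and factoring $(\la_i-\la_j)$ out of $P(\la_i)$, is exactly the regrouping the paper leaves implicit, and it checks out.
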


\begin{proof}Without loss of generality, we may assume that $\la_i=\la_p$. By Proposition \ref{prop:dds} \eqref{f-la:dd3},
\begin{align*}
\De_{\la_1,\dots,\la_{p-1},s}^{(p-1)}{(f)}=
\sum_{j=1}^{p-1}\frac{f(\la_j)}{\prod_{k\in\{1,\dots,p-1\}\setminus\{j\}}
(\la_j-\la_k)(\la_j-s)}+\frac{f(s)}{\prod_{k\in\{1,\dots,p-1\}}(s-\la_k)}.
\end{align*} Next,
\begin{align*}
&\De_{\la_1,\dots,\la_{p-1},\la_p,\la_p}^{(p)}{(f)}=\frac{\partial}{\partial s} \left(\De_{\la_1,\dots,\la_{p-1},s}^{(p-1)}{(f)}\right)\bigg|_{s=\la_p}\\
&\quad=\sum_{j=1}^{p-1}\frac{f(\la_j)}{\prod_{k\in\{1,\dots,p-1\}\setminus\{j\}}
(\la_j-\la_k)(\la_p-\la_j)^2}\\
&\quad\quad+\frac{f'(\la_p)}{\prod_{k\in\{1,\dots,p-1\}}(\la_p-\la_k)}
-f(\la_p)\sum_{j=1}^{p-1}\frac{1}{\prod_{k\in\{1,\dots,p-1\}\setminus\{j\}}
(\la_p-\la_k)(\la_p-\la_j)^2},
\end{align*} which coincides (upon regrouping the terms) with the expression
in the statement of the lemma.
\end{proof}

In the case of repeated knots, the order of the divided difference can be reduced, as it is done in the next lemma.

\begin{lemma}\label{prop:dd2} Let $f\in C^p[a,b]$ and $\la_1,\dots,\la_p\in [a,b]$. Then,
\[\sum_{i=1}^p\De_{\la_1,\dots,\la_p,\la_i}^{(p)}(f)=\dd{\la_p}{p-1}{f'}.\]
\end{lemma}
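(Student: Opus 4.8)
The plan is to prove the identity first for pairwise distinct knots, where Lemma \ref{prop:dd1} applies verbatim, and then to pass to arbitrary (possibly repeated) knots by a continuity argument based on Proposition \ref{prop:dds}\eqref{f-la:dd1}.

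First I would assume $\la_1,\dots,\la_p$ pairwise distinct and substitute the closed expression from Lemma \ref{prop:dd1} for each summand $\De_{\la_1,\dots,\la_p,\la_i}^{(p)}(f)$. Summing over $i=1,\dots,p$ splits the total into two pieces. The ``derivative'' terms sum to
\[\sum_{i=1}^p \frac{f'(\la_i)}{\prod_{k\in\{1,\dots,p\}\setminus\{i\}}(\la_i-\la_k)},\]
which, by the distinct-points case of Proposition \ref{prop:dds}\eqref{f-la:dd3} applied to $f'$ (with the $p$ knots $\la_1,\dots,\la_p$ and order $p-1$), is exactly $\De_{\la_1,\dots,\la_p}^{(p-1)}(f')=\dd{\la_p}{p-1}{f'}$. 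It therefore remains to show that the leftover double sum vanishes.

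The key observation is that the leftover double sum is antisymmetric under interchanging the summation indices. For a fixed unordered pair $\{i,j\}$, the contribution indexed by $(i,j)$ is
\[\frac{1}{(\la_i-\la_j)^2}\left(\frac{f(\la_j)}{\prod_{k\in\{1,\dots,p\}\setminus\{i,j\}}(\la_j-\la_k)}-\frac{f(\la_i)}{\prod_{k\in\{1,\dots,p\}\setminus\{i,j\}}(\la_i-\la_k)}\right),\]
and since $(\la_i-\la_j)^2=(\la_j-\la_i)^2$ and the index set $\{1,\dots,p\}\setminus\{i,j\}$ is symmetric in $i$ and $j$, the contribution indexed by $(j,i)$ is its exact negative. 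Hence these terms cancel in pairs and the double sum is zero, which establishes the identity whenever the knots are distinct.

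Finally I would remove the distinctness hypothesis. Both sides are continuous functions of $(\la_1,\dots,\la_p)\in[a,b]^p$: the right-hand side by Proposition \ref{prop:dds}\eqref{f-la:dd1} applied to $f'\in C^{p-1}[a,b]$, and each summand on the left by the same proposition applied to $f\in C^p[a,b]$ composed with the continuous map $(\la_1,\dots,\la_p)\mapsto(\la_1,\dots,\la_p,\la_i)$. Since the tuples with pairwise distinct coordinates are dense in $[a,b]^p$, the identity extends from that dense set to all of $[a,b]^p$. The step requiring the most care is precisely this last one: the individual terms furnished by Lemma \ref{prop:dd1} blow up as two knots collide, so one cannot pass to the limit term by term. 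It is only the full, already simplified expressions $\sum_{i=1}^p\De^{(p)}_{\la_1,\dots,\la_p,\la_i}(f)$ and $\De^{(p-1)}_{\la_1,\dots,\la_p}(f')$, each continuous by Proposition \ref{prop:dds}\eqref{f-la:dd1}, whose equality survives the limit. (An alternative route that avoids the distinct-points reduction altogether is to identify $\sum_i \De^{(p)}_{\la_1,\dots,\la_p,\la_i}(f)$ with the derivative of $\De^{(p-1)}_{\la_1,\dots,\la_p}(f)$ along the diagonal direction $(1,\dots,1)$ and then use the translation equivariance $\De^{(p-1)}_{\la_1+t,\dots,\la_p+t}(f)=\De^{(p-1)}_{\la_1,\dots,\la_p}\big(f(\cdot+t)\big)$, differentiated at $t=0$.)
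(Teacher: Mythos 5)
Your proposal is correct and follows essentially the same route as the paper's proof: reduce to pairwise distinct knots via the continuity statement in Proposition \ref{prop:dds}\eqref{f-la:dd1}, expand each summand by Lemma \ref{prop:dd1}, identify the $f'$-terms with $\De_{\la_1,\dots,\la_p}^{(p-1)}(f')$ via Proposition \ref{prop:dds}\eqref{f-la:dd3}, and cancel the double sum by pairing $(i,j)$ with $(j,i)$. Your explicit remark that the limit must be taken on the full simplified identity (since the individual terms from Lemma \ref{prop:dd1} blow up as knots collide) is a careful elaboration of what the paper leaves implicit, but it is the same argument.
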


\begin{proof}
In view of Proposition \ref{prop:dds} \eqref{f-la:dd1}, it is enough to prove the lemma only in the case when all $\la_1,\dots,\la_p$ are distinct. Applying Lemma \ref{prop:dd1} \eqref{f-la:dd1} ensures
\begin{align}
\label{f-la:dd2.1}
&\sum_{i=1}^p\De_{\la_1,\dots,\la_p,\la_i}^{(p)}(f)=
\sum_{i=1}^p\frac{f'(\la_i)}{\prod_{k\in\{1,\dots,p\}\setminus\{i\}}(\la_i-\la_k)}\\
\nonumber
&\quad+\sum_{i=1}^p\sum_{j\in\{1,\dots,p\}\setminus\{i\}}\frac{1}{(\la_i-\la_j)^2}
\left(\frac{f(\la_j)}{\prod_{k\in\{1,\dots,p\}\setminus\{i,j\}}(\la_j-\la_k)}-
\frac{f(\la_i)}{\prod_{k\in\{1,\dots,p\}\setminus\{i,j\}}(\la_i-\la_k)}\right)
\end{align}
By Proposition \ref{prop:dds} \eqref{f-la:dd3}, the first summand in \eqref{f-la:dd2.1} equals $\dd{\la_p}{p-1}{f'}$. The second summand in \eqref{f-la:dd2.1} with double summation sign equals zero; to see it, we group and cancel the terms with indices $(i,j)=(i_1,i_2)$ and $(i,j)=(i_2,i_1)$, where $i_1\neq i_2\in\{1,\dots,p\}$.
\end{proof}

\begin{remark}
Depending on the number of repeated knots of the divided difference in Proposition \ref{prop:dds} \eqref{f-la:dd1} and, subsequently, in Lemma \ref{prop:dd2}, the smoothness assumption on $f$ can be relaxed; see for details \cite[Theorem 6.2 and Theorem 6.3]{Vore}.
\end{remark}

The divided difference of a function in $\mathcal{W}_p(\Reals)$ admits a useful representation
as an integral of products of exponentials, each depending on only one knot of the divided difference.

\begin{prop}\label{prop:dd_wp}$($See \cite[Lemma 2.3]{Azamov0}.$)$
For $f\in\mathcal{W}_p(\Reals)$,
\begin{align*}
\dd{\la_{p+1}}{p}{f}=\int_{\Pi^{(p)}}e^{\i
(s_0-s_1)\la_1}\dots e^{\i (s_{p-1}-s_p)\la_p}e^{\i s_p \la_{p+1}}\,d\sigma_f^{(p)}(s_0,\dots,s_p).
\end{align*} Here
\[{\Pi^{(p)}}=\{(s_0,s_1,\dots,s_p)\in\Reals^{p+1}\st |s_p|\leq\dots\leq|s_1|\leq|s_0|,
\text{ \rm sign}(s_0)=\dots=\text{\rm sign}(s_p)\}\] and
$d\sigma_f^{(p)}(s_0,s_1,\dots,s_p)=\i^p\mu_f(ds_0)ds_1\dots ds_p$, where $f(t)=\frac{1}{\sqrt{2\pi}}\int_\Reals e^{\i t\la}\,d\mu_f(\la)$.
\end{prop}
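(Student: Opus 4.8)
The plan is to argue by induction on $p$, exploiting the Fourier representation $f=\int_\Reals e^{\i t\la}\,d\mu_f(t)$ together with the recursive definition of the divided difference. For $p=0$ the asserted identity reads $f(\la_1)=\int_\Reals e^{\i s_0\la_1}\,d\mu_f(s_0)$, which is the defining property of $\mu_f$ (here $\Pi^{(0)}=\Reals$ and $d\sigma_f^{(0)}=\mu_f(ds_0)$). Before treating the inductive step I would record that the right-hand side is a jointly continuous function of $(\la_1,\dots,\la_{p+1})$: its integrand has modulus $1$ and the total mass $\int_{\Pi^{(p)}}d|\sigma_f^{(p)}|$ is finite, since the fiber of $\Pi^{(p)}$ over $s_0=t$ is the simplex $\{|s_p|\leq\dots\leq|s_1|\leq|t|\}$ of volume $|t|^p/p!$, so that $\int_{\Pi^{(p)}}d|\sigma_f^{(p)}|\leq\frac{1}{p!}\int_\Reals|t|^p\,d|\mu_f|(t)$. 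Membership $f\in\mathcal{W}_p(\Reals)$ guarantees this moment is finite, because $d\mu_{f^{(p)}}(t)=(\i t)^p\,d\mu_f(t)$ is a finite measure. As the left-hand side is continuous by Proposition \ref{prop:dds} \eqref{f-la:dd1} and both sides are symmetric in the knots by Proposition \ref{prop:dds} \eqref{f-la:sim}, it then suffices to verify the identity when $\la_p\neq\la_{p+1}$.

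For the inductive step I would assume the formula at order $p-1$ (available since $\mathcal{W}_p(\Reals)\subset\mathcal{W}_{p-1}(\Reals)$ and the representing measure $\mu_f$ is unchanged) and apply the first branch of the recursion in Definition \ref{prop:dddef}. Inserting the order-$(p-1)$ representations of $\De_{\la_1,\dots,\la_{p-1},\la_p}^{(p-1)}(f)$ and $\De_{\la_1,\dots,\la_{p-1},\la_{p+1}}^{(p-1)}(f)$, the sole knot-dependence distinguishing the two terms is the factor $e^{\i s_{p-1}\la_p}$ versus $e^{\i s_{p-1}\la_{p+1}}$. Pulling the difference quotient through the integral over $\Pi^{(p-1)}$ leaves me to evaluate
\begin{align*}
\frac{e^{\i s_{p-1}\la_p}-e^{\i s_{p-1}\la_{p+1}}}{\la_p-\la_{p+1}}
=\i\int_{\{0\leq s_p\leq s_{p-1}\}}e^{\i(s_{p-1}-s_p)\la_p}\,e^{\i s_p\la_{p+1}}\,ds_p,
\end{align*}
which follows from the fundamental theorem of calculus and the substitution $s_p=s_{p-1}(1-u)$ (with the orientation of the $s_p$-interval reversed when $s_{p-1}<0$, so that the constraint becomes $|s_p|\leq|s_{p-1}|$ with $\text{sign}(s_p)=\text{sign}(s_{p-1})$); the factor $s_{p-1}$ produced by differentiating cancels against the Jacobian. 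Substituting back, the domains glue as $\Pi^{(p-1)}\times\{|s_p|\leq|s_{p-1}|,\ \text{sign}(s_p)=\text{sign}(s_{p-1})\}=\Pi^{(p)}$, while the extra factor $\i$ and the new variable $s_p$ upgrade $d\sigma_f^{(p-1)}=\i^{p-1}\mu_f(ds_0)\,ds_1\cdots ds_{p-1}$ to $d\sigma_f^{(p)}=\i^{p}\mu_f(ds_0)\,ds_1\cdots ds_p$, yielding exactly the claimed representation at order $p$.

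I expect the main obstacle to be the interchange of the difference quotient with the integral over $\Pi^{(p-1)}$ and the subsequent Fubini step merging the $\mu_f$-integration with the $s_1,\dots,s_p$-integrations. Both are justified by absolute convergence: the displayed difference quotient is dominated by $|s_{p-1}|$, and the resulting integrand is integrable against $d|\sigma_f^{(p)}|$ precisely because of the moment bound $\int_\Reals|t|^p\,d|\mu_f|(t)<\infty$ noted above. The exponential computation itself is routine, so the analytic content of the argument is concentrated in these finiteness verifications, all of which are supplied by the hypothesis $f\in\mathcal{W}_p(\Reals)$.
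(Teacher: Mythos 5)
The paper does not prove this proposition at all: it is imported verbatim from \cite[Lemma 2.3]{Azamov0}, so the only meaningful comparison is with that source, and your induction on $p$ through the Duhamel identity $\frac{e^{\i s\la_p}-e^{\i s\la_{p+1}}}{\la_p-\la_{p+1}}=\i\int_0^s e^{\i (s-r)\la_p}e^{\i r\la_{p+1}}\,dr$ is precisely the standard argument behind that lemma. The overall structure is sound: base case $p=0$ from the Fourier representation, continuity in the knots to absorb the branch $\la_p=\la_{p+1}$, and the mass bound $\int_{\Pi^{(p)}}d|\sigma_f^{(p)}|\leq\frac{1}{p!}\int_\Reals|s_0|^p\,d|\mu_f|(s_0)$ from the simplex-volume computation. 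One simplification: since the Duhamel identity is an exact pointwise identity rather than a limit, inserting it under the order-$(p-1)$ integral uses only linearity and finiteness of the two integrals; no domination argument is needed there, only for the final Fubini merge. Relatedly, the symmetry appeal is superfluous (Proposition \ref{prop:dds} \eqref{f-la:sim} concerns the left-hand side only, and density of $\{\la_p\neq\la_{p+1}\}$ plus continuity of both sides already suffices).

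Two points should be made honest rather than parenthetical. First, the identity $d\mu_{f^{(p)}}(t)=(\i t)^p\,d\mu_f(t)$, on which your moment bound rests, is not part of the definition of $\mathcal{W}_p(\Reals)$; it requires the short argument that if $f=\widehat{\mu}$ and $f'=\widehat{\nu}$ with $\mu,\nu$ finite Borel measures, then $\nu=\i t\,\mu$ as tempered distributions and hence as measures (both sides being locally finite), iterated $p$ times. Second, and more seriously, your ``orientation of the $s_p$-interval reversed when $s_{p-1}<0$'' conceals a factor $-1$: converting the oriented integral $\int_0^{s_{p-1}}$ into an unoriented Lebesgue integral over $\{|s_p|\leq|s_{p-1}|,\ \mathrm{sign}(s_p)=\mathrm{sign}(s_{p-1})\}$ flips the sign, so on the branch $s_0<0$ the iterated oriented integrals your induction actually produces differ by $(-1)^p$ from the unoriented Lebesgue integral over $\Pi^{(p)}$. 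A direct test makes this concrete: for $p=1$ and $f(\la)=e^{\i t\la}$ with $t<0$ (so $\mu_f=\delta_t$), the unoriented reading of the stated formula yields $\bigl(e^{\i t\la_1}-e^{\i t\la_2}\bigr)/(\la_2-\la_1)$, the negative of the divided difference. The formula is therefore to be read with $ds_1\cdots ds_p$ understood through the substitution $s_j=s_0u_j$ over the simplex $0\leq u_p\leq\cdots\leq u_1\leq 1$, whose Jacobian is $s_0^p$ rather than $|s_0|^p$ --- which is exactly what your iterated construction yields. So your proof does establish the proposition as intended in \cite{Azamov0}, but the sign bookkeeping on the negative branch is the one place where the write-up, read literally, does not match the displayed formula, and it should be stated explicitly rather than left inside a parenthesis.
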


Below, we list properties of piecewise polynomials, which will appear in representations for the higher order spectral shift functions, and include a representation of the divided difference in terms of its Peano kernel.

\begin{prop}\label{prop:sps}
\begin{enumerate}
\item\label{f-la:dd5} $($See \cite[Section 5.2, (2.3) and (2.6)]{Vore}.$)$\\
The basic spline with the break points $\la_1,\dots,\la_{p+1}$, where at
least two of the values are distinct, is defined by
\[t\mapsto\begin{cases}
\frac{1}{|\la_2-\la_1|}\chi_{(\min\{\la_1,\la_2\},\max\{\la_1,\la_2\})}(t)&
\text{ if } p=1\\[2ex]
\dd{\la_{p+1}}{p}{(\la-t)^{p-1}_+}& \text{ if } p>1\end{cases}.\] Here the
truncated power is defined by $x_+^k=\begin{cases}x^k &\text{ if }x\geq
0\\0&\text{ if }x<0,\end{cases}$ for $k\in\Nats$.

The basic spline is non-negative, supported in
\[[\min\{\la_1,\dots,\la_{p+1}\},\max\{\la_1,\dots,\la_{p+1}\}]\] and
integrable with the integral equal to $1/p$.  (Often the basic spline is
normalized so that its integral equals 1.)

\item\label{f-la:dd4} $($See \cite[Section 5.2, (2.2) and Section 4.7, (c)]{Vore}.$)$

Let $[a,b]\supseteq[\min\{\la_1,\dots,\la_{p+1}\},\max\{\la_1,\dots,\la_{p+1}\}]$.
For $f\in C^p[a,b]$,
\begin{align}\nonumber&\dd{\la_{p+1}}{p}{f}\\&\quad=\label{f-la:dd*}
\begin{cases}
\frac{1}{(p-1)!}\int_a^b
f^{(p)}(t)\dd{\la_{p+1}}{p}{(\la-t)^{p-1}_+}\,dt&\;\text{ if
}\;\exists i_1,i_2\text{ such that } \la_{i_1}\neq \la_{i_2}\\[2ex]
\frac{1}{p!}f^{(p)}(\la_1)&\;\text{ if
}\;\la_1=\la_2=\cdots=\la_{p+1}.
\end{cases}\end{align} The first equality in \eqref{f-la:dd*} also holds for $f\in C^{p-1}[a,b]$, with $f^{(p-1)}$ absolutely continuous and $f^{(p)}$ integrable on $[a,b]$.
\end{enumerate}
\end{prop}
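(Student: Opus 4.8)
The plan is to derive the Peano-kernel identity \eqref{f-la:dd4} first and then read off the three properties of the basic spline in \eqref{f-la:dd5} from it, together with two short computations. The one structural fact driving everything is recorded in Proposition \ref{prop:dds} \eqref{f-la:dd2}: for each fixed admissible tuple $(\la_1,\dots,\la_{p+1})$ the map $f\mapsto\dd{\la_{p+1}}{p}{f}$ is a linear functional that annihilates every polynomial of degree at most $p-1$ and sends $\la^p$ to $1$.

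For \eqref{f-la:dd4} with at least two distinct knots, I would first reduce to the case that all of $\la_1,\dots,\la_{p+1}$ are distinct and pass to the general (confluent) case afterwards by the continuity of divided differences in the knots from Proposition \ref{prop:dds} \eqref{f-la:dd1} — exactly the reduction used in the proof of Lemma \ref{prop:dd2}. With distinct knots, Proposition \ref{prop:dds} \eqref{f-la:dd3} expresses $\dd{\la_{p+1}}{p}{\cdot}$ as a finite linear combination of point evaluations. I would then insert Taylor's formula with integral remainder at $a$,
\[f(\la)=\sum_{k=0}^{p-1}\frac{f^{(k)}(a)}{k!}(\la-a)^k+\frac{1}{(p-1)!}\int_a^b f^{(p)}(t)(\la-t)^{p-1}_+\,dt,\]
apply the functional in the variable $\la$, discard the polynomial summand by Proposition \ref{prop:dds} \eqref{f-la:dd2}, and interchange the (now finite) combination of evaluations with the $t$-integration, which is immediate. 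This yields the asserted identity with kernel $\frac{1}{(p-1)!}\dd{\la_{p+1}}{p}{(\la-t)^{p-1}_+}$. The extension to $f\in C^{p-1}[a,b]$ with $f^{(p-1)}$ absolutely continuous holds because the same remainder formula is valid under that hypothesis. The coincident-knot case $\la_1=\dots=\la_{p+1}$ gives $\frac{1}{p!}f^{(p)}(\la_1)$ directly from the recursive definition.

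With \eqref{f-la:dd4} in hand the properties in \eqref{f-la:dd5} follow quickly; write $M(t)=\dd{\la_{p+1}}{p}{(\la-t)^{p-1}_+}$ for the basic spline. The support claim is direct: if $t>\max_i\la_i$ then $(\la_i-t)^{p-1}_+=0$ at every knot and the divided difference vanishes, while if $t<\min_i\la_i$ then $\la\mapsto(\la-t)^{p-1}_+=(\la-t)^{p-1}$ is a polynomial of degree $p-1$ near all the knots, so the order-$p$ divided difference is again zero by Proposition \ref{prop:dds} \eqref{f-la:dd2}. The normalization is obtained by substituting $f(\la)=\la^p/p!$ into \eqref{f-la:dd*}: the left-hand side equals $1/p!$ (the leading coefficient, by Proposition \ref{prop:dds} \eqref{f-la:dd2}), while $f^{(p)}\equiv1$, so $\frac{1}{(p-1)!}\int_a^b M(t)\,dt=\frac{1}{p!}$ and hence $\int_a^b M=1/p$.

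The one genuinely nontrivial point, and the step I expect to be the main obstacle, is non-negativity of $M$. I would establish it via the Hermite–Genocchi representation
\[\dd{\la_{p+1}}{p}{f}=\int_{\Sigma_p}f^{(p)}\Big(\textstyle\sum_{i=1}^{p+1}\theta_i\la_i\Big)\,d\theta,\qquad \Sigma_p=\{\theta_i\ge0,\ \sum_i\theta_i=1\},\]
which comes from the same Taylor-remainder and linearity argument applied inductively (equivalently, from the recursive definition of the divided difference). Taking $f(\la)=(\la-t)^{p-1}_+$, the distributional $p$-th derivative $f^{(p)}=(p-1)!\,\delta(\,\cdot-t)$ is a non-negative measure, so $M(t)$ is, up to the positive constant $(p-1)!$, the density of the pushforward of the (normalized) measure on $\Sigma_p$ under the convex-combination map $\theta\mapsto\sum_i\theta_i\la_i$; being the density of a finite positive measure it is non-negative, and its total mass recovers $\int M=1/p$ once the simplex normalization is accounted for. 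Alternatively, one can avoid the distributional derivative entirely and prove non-negativity by induction on $p$ via the Cox–de Boor recurrence, which writes $M$ on its support as a convex combination of two lower-order basic splines with coefficients in $[0,1]$; the base case $p=1$ is the normalized indicator, which is manifestly non-negative. Everything else reduces to linearity and the polynomial-annihilation property already recorded in Proposition \ref{prop:dds}.
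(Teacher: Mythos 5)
The paper offers no proof of this proposition at all: it is imported verbatim from DeVore--Lorentz \cite{Vore} (Sections 4.7 and 5.2), so there is nothing internal to compare your argument against. What you have written is essentially the standard textbook derivation of these B-spline facts, and it is sound in outline: the Peano-kernel identity via Taylor's formula with integral remainder at $a$, annihilation of degree-$\le p-1$ polynomials by Proposition \ref{prop:dds} \eqref{f-la:dd2}, the support argument (identically zero near all knots for $t>\max_i\la_i$, agreement with a degree-$(p-1)$ polynomial on a neighborhood of the knots for $t<\min_i\la_i$ --- correctly using that the divided difference depends only on values and derivatives at the knots), and the normalization $\int M=1/p$ by plugging $f(\la)=\la^p/p!$ into \eqref{f-la:dd*} are all correct. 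Your proof has the virtue of being self-contained and of using only the tools the paper already records in Proposition \ref{prop:dds}, whereas the paper simply buys the result wholesale from \cite{Vore}.

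Two steps deserve more care than your sketch gives them. First, the confluent-knot passage: continuity of divided differences in the knots (Proposition \ref{prop:dds} \eqref{f-la:dd1}) handles only the left-hand side of \eqref{f-la:dd*}; on the right-hand side the kernels $t\mapsto\dd{\la_{p+1}}{p}{(\la-t)^{p-1}_+}$ need not converge pointwise everywhere as knots coalesce (their smoothness degrades with multiplicity), so you must justify convergence of the integrals. A clean fix, consonant with the paper's own Lemma \ref{prop:dd_my_spline}, is to integrate by parts against the antiderivative $\frac1p\dd{\la_{p+1}}{p}{(\la-t)^{p}_+}$, which is continuous in the knots for fixed $t$ (since $(\la-t)_+^p\in C^{p-1}$), uniformly bounded by $1$ (Proposition \ref{prop:02}), and then invoke dominated convergence; this is a weak-convergence argument for the B-spline measures rather than pointwise convergence of their densities. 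Second, your Hermite--Genocchi argument for non-negativity applies the formula to $f(\la)=(\la-t)^{p-1}_+$, which is only $C^{p-2}$, so $f^{(p)}=(p-1)!\,\delta_t$ is distributional; to make this rigorous you should mollify, apply Hermite--Genocchi to the smoothed function (where $f^{(p)}_\eps\ge0$ gives non-negativity immediately), and pass to the limit, which yields $M(t)\ge0$ for every $t$ at which $M$ is continuous and hence, after the a.e.\ identification, everywhere on the support. Your alternative route via the Cox--de Boor recurrence avoids distributions but inherits the same confluent-limit issue in its base cases, so either way one short limiting argument is the missing line. With those two repairs the proof is complete and correct.
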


Properties of an antiderivative of the basic spline are written below.

\begin{prop}\label{prop:03}$($See \cite[Lemma 3.1]{unbdds}.$)$
(i) If $\la_1=\dots=\la_{p+1}\in\Reals$, with $p\geq 0$, then
\begin{align}\label{f-la:03.-2}
\dd{\la_{p+1}}{p}{(\la-t)_+^p}=\chi_{(-\infty,\la_1)}(t).
\end{align}
(ii) If not all $\la_1,\dots,\la_{p+1}\in\Reals$ coincide, let $\mathcal{I}$ be
the set of indices $i$ for which $\la_i$ are distinct and let $m(\la_i)$ be
the multiplicity of $\la_i$. Assume that $p\geq 1$ and $M=\max_{i\in\mathcal{I}}m(\la_i)\leq p$. Then, $t\mapsto\dd{\la_{p+1}}{p}{(\la-t)_+^p}\in C^{p-M}(\Reals)$ and
\begin{align}\label{f-la:03.-1}
\dd{\la_{p+1}}{p}{(\la-t)_+^p}=p\int_t^\infty\dd{\la_{p+1}}{p}{(\la-s)_+^{p-1}}\,ds.
\end{align}
\end{prop}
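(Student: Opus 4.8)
The plan is to reduce both parts to the representation of a divided difference as a \emph{fixed} linear combination of derivatives of the underlying function supplied by Proposition~\ref{prop:dds}~\eqref{f-la:dd3}, and to exploit that $(\la-t)_+^p$ depends simply on $\la$ and on $t$. For part~(i) all knots coincide at $\la_1$, so for $t\neq\la_1$ the function $\la\mapsto(\la-t)_+^p$ is smooth near $\la_1$ (it is $0$ when $\la_1<t$ and $(\la-t)^p$ when $\la_1>t$); hence the confluent formula in Proposition~\ref{prop:sps}~\eqref{f-la:dd4} applies and gives $\frac1{p!}\frac{d^p}{d\la^p}(\la-t)_+^p\big|_{\la=\la_1}$, which equals $0$ for $\la_1<t$ and $1$ for $\la_1>t$. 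This is precisely $\chi_{(-\infty,\la_1)}(t)$ off the single point $t=\la_1$, which suffices for \eqref{f-la:03.-2}.

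For part~(ii), since $M\leq p$ the function $\la\mapsto(\la-t)_+^p$ is $C^{p-1}$, so Proposition~\ref{prop:dds}~\eqref{f-la:dd3} yields
\[
\dd{\la_{p+1}}{p}{(\la-t)_+^p}=\sum_{i\in\mathcal{I}}\sum_{j=0}^{m(\la_i)-1}c_{ij}\,\frac{\partial^j}{\partial\la^j}(\la-t)_+^p\Big|_{\la=\la_i}=\sum_{i\in\mathcal{I}}\sum_{j=0}^{m(\la_i)-1}c_{ij}\,\frac{p!}{(p-j)!}\,(\la_i-t)_+^{p-j},
\]
where the coefficients $c_{ij}=c_{ij}(\la_1,\dots,\la_{p+1})$ depend only on the knots, not on the function. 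As a function of $t$, each summand $(\la_i-t)_+^{p-j}$ is $C^{p-j-1}$, the least smooth occurring at $j=m(\la_i)-1$ and being $C^{p-m(\la_i)}$; taking the minimum over $i\in\mathcal{I}$ gives $t\mapsto\dd{\la_{p+1}}{p}{(\la-t)_+^p}\in C^{p-M}(\Reals)$.

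To prove \eqref{f-la:03.-1} I would differentiate this sum in $t$ using $\frac{d}{dt}(\la_i-t)_+^{p-j}=-(p-j)(\la_i-t)_+^{p-j-1}$. Because the $c_{ij}$ are the \emph{same} universal coefficients of the operator $\De^{(p)}_{\la_1,\dots,\la_{p+1}}$, after collecting factorials the result is exactly $-p$ times the corresponding linear combination for $(\la-s)_+^{p-1}$; applying Proposition~\ref{prop:dds}~\eqref{f-la:dd3} once more, now to $(\la-s)_+^{p-1}$, this says
\[
\frac{d}{dt}\dd{\la_{p+1}}{p}{(\la-t)_+^p}=-p\,\dd{\la_{p+1}}{p}{(\la-s)_+^{p-1}}\Big|_{s=t}.
\]
Since $(\la-t)_+^p$ vanishes identically in $\la$ on $[\min_i\la_i,\max_i\la_i]$ once $t>\max_i\la_i$, the left-hand function of $t$ vanishes for $t$ large; integrating the derivative identity from $t$ to $+\infty$ then yields \eqref{f-la:03.-1}.

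I expect the main obstacle to be the boundary case $M=p$. There the asserted regularity $C^{p-M}=C^0$ is mere continuity, and the integrand $\dd{\la_{p+1}}{p}{(\la-s)_+^{p-1}}$ — the basic spline of Proposition~\ref{prop:sps}~\eqref{f-la:dd5} — is undefined at the finitely many $s$ equal to a knot, where a $p$-fold confluence meets the jump of $(\la-s)_+^{0}$. I would resolve this by reading the sum representation as exhibiting $t\mapsto\dd{\la_{p+1}}{p}{(\la-t)_+^p}$ as a finite combination of absolutely continuous truncated powers, so that the derivative identity holds for every $t$ outside those knots; since the basic spline is nonnegative and integrable with total mass $1/p$ by Proposition~\ref{prop:sps}~\eqref{f-la:dd5}, the right-hand side of \eqref{f-la:03.-1} is absolutely continuous in $t$, and the fundamental theorem of calculus upgrades the almost-everywhere identity to the stated equality. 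The same mass $1/p$ gives the sanity check that both sides tend to $1$ as $t\to-\infty$, matching part~(i).
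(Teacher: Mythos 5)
Your proof is correct; note, though, that the paper does not prove this proposition at all --- it is imported verbatim from \cite[Lemma 3.1]{unbdds} --- so the comparison is with that external argument rather than with anything in the present text. Your reduction is legitimate and self-contained given the facts quoted in Section \ref{sec:dd}: since $M\le p$ and $(\la-t)_+^p\in C^{p-1}(\Reals)$, the expansion of Proposition \ref{prop:dds} \eqref{f-la:dd3} applies with knot-dependent, function-independent coefficients $c_{ij}$, and the arithmetic $\frac{d}{dt}\bigl[\tfrac{p!}{(p-j)!}(\la_i-t)_+^{p-j}\bigr]=-p\,\tfrac{(p-1)!}{((p-1)-j)!}(\la_i-t)_+^{(p-1)-j}$ indeed reproduces $-p$ times the same expansion applied to $(\la-s)_+^{p-1}$, which gives the derivative identity; your smoothness count $p-j-1\ge p-m(\la_i)\ge p-M$ for each summand settles the $C^{p-M}$ claim. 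You also correctly isolate the only delicate configuration, $M=p$: there the exponent $p-j=1$ occurs, the term-by-term derivative fails exactly at the $p$-fold knot, and --- since the expansion of $(\la-s)_+^{p-1}$ then requires a $(p-1)$-st derivative of a merely $C^{p-2}$ function --- the pointwise identity is available only for $s$ off the knots; your repair via local absolute continuity of the truncated powers $(\la_i-t)_+^k$, $k\ge 1$, vanishing of the left side for $t>\max_i\la_i$, and the fundamental theorem of calculus is exactly what is needed, the finitely many exceptional $s$ being Lebesgue-null and hence harmless for the integral in \eqref{f-la:03.-1}. One caveat worth recording: in part (i) the asserted equality \eqref{f-la:03.-2} at the single point $t=\la_1$ is genuinely problematic (for $p\ge1$ the left-hand side is undefined there, and for $p=0$ its value depends on the convention adopted for $0_+^0$), so your ``off one point'' version is the honest statement; it suffices for every use made of the proposition in this paper (Lemma \ref{prop:dd_my_spline}, Theorems \ref{prop:finite} and \ref{prop:infinite}), where the identity only enters under $dt$-integrals.
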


\begin{prop}\label{prop:02}$($See \cite[Lemma 3.2]{unbdds}.$)$
Let $(\la_1,\dots,\la_{p+1})\in\Reals^{p+1}$. Then the function $\dd{\la_{p+1}}{p}{(\la-t)_+^p}$ is decreasing; it is equal to $1$ when $t<\min_{1\leq k\leq p+1}\la_k$ and equal to $0$ when
$t\geq\max_{1\leq k\leq p+1}\la_k$.
\end{prop}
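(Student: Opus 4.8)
The plan is to reduce the statement to the two representations of the truncated-power divided difference established above, namely the pointwise formula of Proposition~\ref{prop:03}(i) and the antiderivative formula of Proposition~\ref{prop:03}(ii), combined with the non-negativity and normalization of the basic spline recorded in Proposition~\ref{prop:sps}~\eqref{f-la:dd5}. I would split into two cases according to whether or not all of the knots coincide.

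First I would dispose of the degenerate case $\la_1=\dots=\la_{p+1}$. Here Proposition~\ref{prop:03}(i), eq.~\eqref{f-la:03.-2}, gives directly
\[\dd{\la_{p+1}}{p}{(\la-t)_+^p}=\chi_{(-\infty,\la_1)}(t),\]
and the indicator of a left ray is evidently decreasing, equals $1$ for $t<\la_1=\min_k\la_k=\max_k\la_k$, and equals $0$ for $t\geq\la_1$; this settles all three assertions at once.

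For the main case, in which not all the knots coincide, I would invoke the integral formula of Proposition~\ref{prop:03}(ii), eq.~\eqref{f-la:03.-1},
\[\dd{\la_{p+1}}{p}{(\la-t)_+^p}=p\int_t^\infty\dd{\la_{p+1}}{p}{(\la-s)_+^{p-1}}\,ds.\]
The integrand $s\mapsto\dd{\la_{p+1}}{p}{(\la-s)_+^{p-1}}$ is, up to the factor $p$, exactly the basic spline of Proposition~\ref{prop:sps}~\eqref{f-la:dd5}, which is non-negative, supported in $[\min_k\la_k,\max_k\la_k]$, and has total integral $1/p$. Monotonicity is then immediate: since the integrand is non-negative, the map $t\mapsto p\int_t^\infty(\cdots)\,ds$ is decreasing. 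For $t<\min_k\la_k$ the ray $[t,\infty)$ contains the whole support of the spline, so the integral equals $1/p$ and the value is $p\cdot(1/p)=1$; for $t\geq\max_k\la_k$ the ray $[t,\infty)$ misses the support entirely, so the integral vanishes and the value is $0$.

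I do not expect any genuinely hard step; the result is essentially a bookkeeping consequence of the two earlier representations. The only points requiring care are checking that the hypotheses of Proposition~\ref{prop:03}(ii) are met in the non-degenerate case — if not all knots coincide then each knot has multiplicity at most $p$, so that $M\leq p$ and $p\geq 1$ as required — and confirming that the integrand coincides with the basic spline for every $p\geq 1$ (for $p=1$ this is a short direct computation with $\dd{\la_2}{1}{(\la-s)_+^0}$), including the correct behavior at the endpoint $t=\max_k\la_k$.
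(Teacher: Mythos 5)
Your proof is correct, and it is essentially the intended argument: the paper states Proposition \ref{prop:02} without proof, citing \cite[Lemma 3.2]{unbdds}, and the derivation there proceeds exactly as you do, via \eqref{f-la:03.-2} in the degenerate case and, otherwise, via the antiderivative formula \eqref{f-la:03.-1} combined with the non-negativity, support, and normalization $1/p$ of the basic spline from Proposition \ref{prop:sps}~\eqref{f-la:dd5}. You also correctly flag the two points of care — that ``not all knots coincide'' forces $M\leq p$ so Proposition \ref{prop:03}(ii) applies, and that the $p=1$ case of the spline identification needs the separate direct computation with $\De^{(1)}_{\la_1,\la_2}\bigl((\la-s)_+^0\bigr)$ — so nothing is missing.
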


We will need a representation of the divided difference in terms of an antiderivative of the corresponding basic spline \eqref{f-la:03.-1}.

\begin{lemma}\label{prop:dd_my_spline}
Let $[a,b]\supseteq[\min\{\la_1,\dots,\la_{p+1}\},\max\{\la_1,\dots,\la_{p+1}\}]$.
For $f\in C^{p+1}[a,b]$,
\begin{align}
\label{f-la:finite4}
\dd{\la_{p+1}}{p}{f}=\frac{1}{p!}f^{(p)}(a)+\frac{1}{p!}\int_a^b f^{(p+1)}(t)\dd{\la_{p+1}}{p}{(\la-t)^{p}_+}\,dt.
\end{align}
\end{lemma}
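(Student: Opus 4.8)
The plan is to reduce the statement to the established Peano kernel formula from Proposition \ref{prop:sps} \eqref{f-la:dd4} by applying it to the derivative $f'$ rather than to $f$ directly. The key observation is that the antiderivative of the basic spline appearing on the right-hand side of \eqref{f-la:finite4}, namely $t\mapsto\dd{\la_{p+1}}{p}{(\la-t)^{p}_+}$, is related by \eqref{f-la:03.-1} of Proposition \ref{prop:03} to the basic spline $\dd{\la_{p+1}}{p}{(\la-s)^{p-1}_+}$ that governs the order-$p$ divided difference of an arbitrary smooth function. So the strategy is to take the right-hand side of \eqref{f-la:finite4} and integrate by parts, moving one derivative off of $f^{(p+1)}$ and onto the antiderivative-of-spline factor, thereby converting the expression into the standard Peano representation of $\dd{\la_{p+1}}{p}{f}$.

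Concretely, I would start from the integral $\frac{1}{p!}\int_a^b f^{(p+1)}(t)\,\dd{\la_{p+1}}{p}{(\la-t)^{p}_+}\,dt$ and integrate by parts with $u=f^{(p)}(t)$ as the antiderivative of $f^{(p+1)}$ and $dv$ the spline-antiderivative factor. By Proposition \ref{prop:02}, the factor $\dd{\la_{p+1}}{p}{(\la-t)^{p}_+}$ equals $1$ for $t<\min_k\la_k$ and $0$ for $t\geq\max_k\la_k$; evaluated at the endpoints $a=\min_k\la_k$ and $b=\max_k\la_k$ of our interval, this gives boundary values $1$ at $a$ (approaching from above) and $0$ at $b$, so the boundary term of the integration by parts contributes exactly $-\frac{1}{p!}f^{(p)}(a)$, which cancels the lone term $\frac{1}{p!}f^{(p)}(a)$ in \eqref{f-la:finite4}. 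The remaining integral involves the derivative in $t$ of $\dd{\la_{p+1}}{p}{(\la-t)^{p}_+}$, and by differentiating the relation \eqref{f-la:03.-1} of Proposition \ref{prop:03} one finds $\frac{d}{dt}\dd{\la_{p+1}}{p}{(\la-t)^{p}_+}=-p\,\dd{\la_{p+1}}{p}{(\la-t)^{p-1}_+}$, i.e. minus $p$ times the basic spline of order $p$. Collecting signs, the surviving term becomes $\frac{1}{(p-1)!}\int_a^b f^{(p)}(t)\,\dd{\la_{p+1}}{p}{(\la-t)^{p-1}_+}\,dt$, which is precisely the Peano kernel expression for $\dd{\la_{p+1}}{p}{f}$ in Proposition \ref{prop:sps} \eqref{f-la:dd4}.

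There is one case distinction to handle, mirroring the one in Proposition \ref{prop:sps} \eqref{f-la:dd4}: when all the knots coincide, $\la_1=\dots=\la_{p+1}$, the interval $[a,b]$ degenerates to a point and the integral formulation must be replaced by the direct value. In that case $a=b=\la_1$ and the formula \eqref{f-la:finite4} should be read consistently with \eqref{f-la:03.-2} of Proposition \ref{prop:03}, which identifies the spline-antiderivative with the characteristic function $\chi_{(-\infty,\la_1)}$; this reproduces $\frac{1}{p!}f^{(p)}(\la_1)$ as required, matching the second branch of \eqref{f-la:dd*}. I would dispatch this degenerate case separately at the outset and then carry out the integration-by-parts argument under the assumption that at least two knots are distinct.

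The main obstacle I anticipate is the careful bookkeeping at the boundary of the integration by parts: one must verify that the antiderivative-of-spline factor indeed takes the clean values $1$ and $0$ at $t=a^+$ and $t=b^-$ respectively (rather than some intermediate value at a repeated endpoint knot), and that the regularity of $\dd{\la_{p+1}}{p}{(\la-t)^{p}_+}$ guaranteed by Proposition \ref{prop:03}(ii) — continuity in $t$ together with the differentiation formula — is strong enough to justify the integration by parts on $[a,b]$. Since $f\in C^p[a,b]$ gives $f^{(p)}$ continuous but only guarantees $f^{(p+1)}$ in the integrand in a weak sense, I would also note (as the final sentence of Proposition \ref{prop:sps} \eqref{f-la:dd4} already anticipates for the relaxed hypotheses) that the formula is really an identity between the continuous function $\dd{\la_{p+1}}{p}{f}$ and an integral expression, so the computation is valid whenever the integration by parts is justified. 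Everything else is a routine sign-and-factorial verification.
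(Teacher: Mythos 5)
Your proposal is correct and is essentially the paper's own proof: both arguments rest on the Peano kernel formula of Proposition \ref{prop:sps} \eqref{f-la:dd4}, a single integration by parts mediated by the antiderivative relation \eqref{f-la:03.-1} of Proposition \ref{prop:03} together with the boundary values $1$ and $0$ from Proposition \ref{prop:02}, and the same separate treatment of the coincident-knot case via \eqref{f-la:03.-2}. The only difference is cosmetic --- you run the integration by parts from the right-hand side of \eqref{f-la:finite4} back to the Peano representation, while the paper computes in the forward direction.
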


\begin{proof}
Assume first that not all $\la_1,\dots,\la_{p+1}$ coincide.
Applying Proposition \ref{prop:sps} \eqref{f-la:dd4} and then integrating by parts and applying the representation \eqref{f-la:03.-1} of Proposition \ref{prop:03} provide
\begin{align*}
\nonumber
&\dd{\la_{p+1}}{p}{f}\\
&\quad=
\frac{1}{(p-1)!}\int_a^b f^{(p)}(t)\dd{\la_{p+1}}{p}{(\la-t)^{p-1}_+}\,dt\\
\nonumber
&\quad=-\frac{1}{p!}\left(f^{(p)}(t)\dd{\la_{p+1}}{p}{(\la-t)^p_+}\right)\bigg|_a^b+
\frac{1}{p!}\int_a^b f^{(p+1)}(t)\dd{\la_{p+1}}{p}{(\la-t)^p_+}\,dt.
\end{align*}
By Proposition \ref{prop:02}, the latter reduces to \eqref{f-la:finite4}.
If $\la_1=\dots=\la_{p+1}$, then by Proposition \ref{prop:sps} \eqref{f-la:dd4},
\begin{align*}
\De_{\la_1,\dots,\la_1}^{(p)}(f)&=\frac{1}{p!}f^{(p)}(\la_1)=
\frac{1}{p!}f^{(p)}(a)+\frac{1}{p!}\int_a^{\la_1}f^{(p+1)}(t)\,dt.
\end{align*}
With use of the representation \eqref{f-la:03.-2} of Proposition \ref{prop:03}, the latter can be rewritten as \eqref{f-la:finite4}.
\end{proof}

\section{Traces of multiple operator integrals}
\label{sec:mult}

In this section, we represent traces of certain multiple operator integrals as multiple scalar integrals. In particular, we obtain useful formulas for the traces of the G\^ateaux derivatives $\tau\left[\frac{d^p}{dx^p}f(H_0+xV)\right]$, where $V=V^*$ is a Hilbert-Schmidt perturbation of a self-adjoint operator $H_0$ and $f\in\mathcal{W}_p(\Reals)$.

\subsection{Multiple spectral measures}
We will need the facts that certain multi-measures extend to finite countably additive
measures.

\begin{prop}\label{prop:m}
Let $2\leq p\in\Nats$ and let $E_1,E_2,\ldots,E_p$ be projection-valued Borel
measures from $\Reals$ to $\Mcal$. Suppose that $V_1,\ldots,V_p$ belong to
$\ncs{2}$. Assume that either $\Mcal=\BH$ or $p=2$. Then there is a unique
(complex) Borel measure $m$ on $\Reals^p$ with total variation not exceeding
the product $\norm{V_1}_2\norm{V_2}_2\cdots\,\norm{V_p}_2$, whose value on
rectangles is given by
\[m(A_1\times A_2\times\cdots\times A_p)
=\tau\big[E_1(A_1)V_1E_2(A_2)V_2\dots V_{p-1}E_p(A_p)V_p\big]\]
for all Borel subsets $A_1,A_2,\ldots,A_p$ of $\Reals$.
\end{prop}

\begin{remark}
In the case of $\Mcal=\BH$ and $V$ a Hilbert-Schmidt operator, Proposition
\ref{prop:m} was obtained in \cite{Birman96,Pavlov}. For a general $\Mcal$ and $V\in\ncs{2}$,
the set function $m$ is known to be of bounded variation only if $p=2$ (see \cite[Section 4]{ds} for a positive result and a counterexample).
\end{remark}

\begin{prop}$($See \cite[Corollary 4.3]{ds}.$)$\label{prop:m1}
Under the assumptions of Proposition \ref{prop:m}, there is a unique
(complex) Borel measure $m_1$ on $\Reals^p$ with total variation not exceeding
the product $\norm{V_1}_2\norm{V_2}_2\cdots\,\norm{V_p}_2$, whose value on
rectangles is given by
\[m_1(A_1\times A_2\times\cdots\times A_p\times A_{p+1})
=\tau\big[E_1(A_1)V_1E_2(A_2)V_2\dots V_{p-1}E_p(A_p)V_pE_1(A_{p+1})\big]\]
for all Borel subsets $A_1,A_2,\ldots,A_p,A_{p+1}$ of $\Reals$.
\end{prop}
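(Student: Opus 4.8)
The plan is to reduce the assertion directly to Proposition \ref{prop:m} by exploiting the trace property together with the multiplicativity of the spectral measure $E_1$. Since $p\geq 2$, the product $E_1(A_1)V_1E_2(A_2)V_2\dots V_{p-1}E_p(A_p)V_pE_1(A_{p+1})$ contains at least two Hilbert--Schmidt factors and is therefore $\tau$-trace class, so that cyclicity of $\tau$ is legitimate. Moving the bounded projection $E_1(A_{p+1})$ to the front and using $E_1(A_{p+1})E_1(A_1)=E_1(A_1\cap A_{p+1})$ yields
\[\tau\big[E_1(A_1)V_1E_2(A_2)V_2\dots V_{p-1}E_p(A_p)V_pE_1(A_{p+1})\big]=\tau\big[E_1(A_1\cap A_{p+1})V_1E_2(A_2)V_2\dots V_{p-1}E_p(A_p)V_p\big],\]
which, by Proposition \ref{prop:m}, equals $m\big((A_1\cap A_{p+1})\times A_2\times\cdots\times A_p\big)$.

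This identity suggests defining $m_1$ as the pushforward of $m$ under the diagonal-duplication map $\delta\colon\Reals^p\to\Reals^{p+1}$, $\delta(\la_1,\dots,\la_p)=(\la_1,\dots,\la_p,\la_1)$. Since $\delta$ is continuous, hence Borel measurable, $m_1:=\delta_*m$ is a well-defined finite complex Borel measure on $\Reals^{p+1}$, countably additive because $\delta^{-1}$ respects countable disjoint unions and finite because $m$ is. On a rectangle one computes $\delta^{-1}(A_1\times\cdots\times A_{p+1})=(A_1\cap A_{p+1})\times A_2\times\cdots\times A_p$, so that $m_1$ takes exactly the value displayed above, as required.

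For the total variation bound I would note that for any Borel partition $\{B_i\}$ of $\Reals^{p+1}$ the sets $\{\delta^{-1}(B_i)\}$ form a Borel partition of $\Reals^p$, whence $\sum_i|m_1(B_i)|=\sum_i|m(\delta^{-1}(B_i))|\leq\norm{m}$; taking the supremum over all such partitions gives $\norm{m_1}\leq\norm{m}\leq\norm{V_1}_2\cdots\norm{V_p}_2$ by Proposition \ref{prop:m}. Uniqueness follows from a standard Dynkin argument: the rectangles form a $\pi$-system generating the Borel $\sigma$-algebra of $\Reals^{p+1}$ and contain the whole space, while the collection of sets on which two finite complex measures agree is a $\lambda$-system (finiteness gives closure under complements and countable additivity gives closure under disjoint unions), so agreement on rectangles forces agreement everywhere.

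There is no serious obstacle here, since the construction merely transports the already-established measure $m$ through the map $\delta$. The only two points that require genuine care are the verification that the interleaved product is honestly trace class, so that the opening application of cyclicity of $\tau$ is valid, and the bookkeeping showing that passing to the pushforward does not increase the total variation.
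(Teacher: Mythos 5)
Your proof is correct: the trace-class justification for the interleaved product, the identity $\tau\big[E_1(A_1)V_1\cdots V_pE_1(A_{p+1})\big]=m\big((A_1\cap A_{p+1})\times A_2\times\cdots\times A_p\big)$ via cyclicity of $\tau$ and $E_1(A_{p+1})E_1(A_1)=E_1(A_1\cap A_{p+1})$, the realization $m_1=\delta_*m$ under the diagonal-duplication map, the partition argument for $\norm{m_1}\leq\norm{m}\leq\norm{V_1}_2\cdots\norm{V_p}_2$, and the $\pi$--$\lambda$ uniqueness are all sound, and this reduction to the measure $m$ of Proposition \ref{prop:m} is essentially the derivation behind the result as cited (the paper itself gives no proof, quoting \cite[Corollary 4.3]{ds}). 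Note only that your construction correctly produces a measure on $\Reals^{p+1}$, which also exposes the typo in the statement, where $m_1$ is said to live on $\Reals^p$.
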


In the sequel, we will work with the set functions
\begin{align*}
m_{p,H_0,V}(A_1\times A_2\times\cdots\times A_p)
=\tau\big[E_{H_0}(A_1)VE_{H_0}(A_2)V\dots VE_{H_0}(A_p)V\big],
\end{align*}
\begin{align*}
m_{p,H_0,V}^{(1)}(A_1\times A_2\times\cdots\times A_{p+1})
=\tau\big[E_{H_0}(A_1)VE_{H_0}(A_2)V\dots VE_{H_0}(A_p)VE_{H_0}(A_{p+1})\big]
\end{align*}
and their countably-additive extensions (when they exist), called multiple
spectral measures. Here $A_j,$ $1\leq j\leq p$, are measurable subsets of $\Reals$, $H_0=H_0^*$ is affiliated with $\Mcal$, $E_{H_0}$ is the spectral measure of $H_0$, and $V=V^*\in\ncs{2}$. Clearly, the measures
$m_{p,H_0,V}$ and $m_{p,H_0,V}^{(1)}$ are particular representatives of the measures $m$ and $m_1$, respectively.

\begin{prop}$($See \cite[Theorem 4.5]{ds}.$)$\label{prop:m_free}
Let $\tau$ be a finite trace normalized by $\tau(I)=1$ and let $H_0=H_0^*$ be affiliated with $\Mcal$ and $V=V^*\in\Mcal$. Assume that $(zI-H_0)^{-1}$ and $V$ are free. Then the set functions $m_{p,H_0,V}$ and $m_{p,H_0,V}^{(1)}$ extend to countably additive measures of bounded variation.
\end{prop}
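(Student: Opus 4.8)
The final statement is Proposition \ref{prop:m_free}, which claims that under freeness of $(zI-H_0)^{-1}$ and $V$ (with $\tau$ finite and normalized), the set functions $m_{p,H_0,V}$ and $m_{p,H_0,V}^{(1)}$ extend to countably additive measures of bounded variation. The plan is to reduce the question to producing uniform bounds on the total variation of these set functions, since bounded variation together with the multiplicativity on rectangles will force countable additivity via a standard extension argument. The key structural input is freeness: the goal is to exploit the fact that alternating products of centered elements from the two free subalgebras have vanishing trace, which lets me compute $\tau\big[E_{H_0}(A_1)VE_{H_0}(A_2)V\cdots VE_{H_0}(A_p)V\big]$ explicitly in terms of moments $\tau(V^k)$ and the spectral distribution of $H_0$.

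First I would recenter the spectral projections and the perturbation. Write $E_{H_0}(A_j)=\tau(E_{H_0}(A_j))\,I+\mathring{E}_j$ and $V=\tau(V)\,I+\mathring{V}$, where $\mathring{E}_j$ and $\mathring{V}$ are the trace-zero (centered) parts lying in the algebra generated by $(zI-H_0)^{-1}$ and the algebra generated by $V$, respectively. Expanding the product $E_{H_0}(A_1)V\cdots E_{H_0}(A_p)V$ into a sum of monomials and applying the trace, freeness guarantees that any monomial which is a genuine alternating word in the centered generators has trace zero. Consequently, the trace collapses to a sum of products of the scalars $\tau(E_{H_0}(A_j))$ and $\tau(V^k)$, dictated by the combinatorics of non-crossing partitions adapted to the alternating structure. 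This is the computational heart of the argument.

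Having obtained such an explicit formula, I would read off that $m_{p,H_0,V}(A_1\times\cdots\times A_p)$ is, up to bounded combinatorial coefficients, a finite linear combination of products of the form $\prod_j \nu_{H_0}(A_{i_j})$, where $\nu_{H_0}$ is the scalar spectral distribution $A\mapsto\tau(E_{H_0}(A))$ (a genuine probability measure since $\tau(I)=1$), with coefficients controlled by powers of $\tau(V)$ and $\tau(V^2)=\norm{V}_2^2$. Each such product of copies of $\nu_{H_0}$ is a product measure on a subset of the coordinates, hence extends to an honest finite Borel measure of bounded variation; a finite linear combination of these extends as well, with total variation bounded by the sum of the coefficient magnitudes. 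The same recentering-and-expansion scheme applies verbatim to $m_{p,H_0,V}^{(1)}$, now with $p+1$ spectral factors, yielding its extension as a measure of bounded variation.

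The main obstacle is the bookkeeping in the freeness expansion: after recentering one must correctly identify which monomials survive (the non-alternating ones, where two consecutive centered factors come from the same algebra and must be re-decomposed) and organize the surviving terms so that the resulting scalar coefficients are manifestly products of moments of $V$ and measures $\nu_{H_0}$. I expect this to require an inductive or recursive reduction on $p$, peeling off factors one at a time and using the freeness relation $\tau(a_1 b_1 a_2 b_2\cdots)=0$ for centered alternating words; the residual non-centered interactions then feed back as lower-order terms. Once the scalar structure is exposed, the passage from a bounded-variation multiplicative set function to a countably additive Borel measure is routine and parallels the extension arguments already invoked in Propositions \ref{prop:m} and \ref{prop:m1}.
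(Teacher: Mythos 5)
Your overall strategy --- recenter against the trace, use that alternating words in centered free elements have zero trace, collapse the mixed moment $\tau\big[E_{H_0}(A_1)V\cdots E_{H_0}(A_p)V\big]$ to a universal polynomial in scalar moments, and then extend the resulting set function term by term --- is exactly the route of the proof the paper points to (the paper gives no proof of this proposition; it cites \cite[Theorem 4.5]{ds}). However, the structural ``read-off'' in your third paragraph is wrong as stated: the surviving terms are \emph{not} merely products $\prod_j\nu_{H_0}(A_{i_j})$ of singleton masses. When the freeness recursion merges several spectral factors into one block you obtain joint moments of the projections, and since $E_{H_0}(A_{j_1})\cdots E_{H_0}(A_{j_k})=E_{H_0}(A_{j_1}\cap\cdots\cap A_{j_k})$, these contribute factors $\nu_{H_0}(A_{j_1}\cap\cdots\cap A_{j_k})$. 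Already for $p=2$ the standard freeness formula gives
\begin{align*}
\tau\big[E_{H_0}(A_1)VE_{H_0}(A_2)V\big]
&=\tau(V)^2\,\nu_{H_0}(A_1\cap A_2)
+\tau(V^2)\,\nu_{H_0}(A_1)\nu_{H_0}(A_2)\\
&\quad-\tau(V)^2\,\nu_{H_0}(A_1)\nu_{H_0}(A_2),
\end{align*}
and if $\nu_{H_0}$ is non-atomic and $\tau(V)\neq 0$, the first term is the diagonal measure $\tau(V)^2 D_*\nu_{H_0}$ with $D(t)=(t,t)$, which is singular with respect to every product measure; no finite linear combination of products of copies of $\nu_{H_0}$ represents it. The correct statement is that $m_{p,H_0,V}$ (and likewise $m_{p,H_0,V}^{(1)}$, with $p+1$ spectral slots and the cyclic identification $E_{H_0}(A_{p+1})E_{H_0}(A_1)=E_{H_0}(A_{p+1}\cap A_1)$) is a finite linear combination of pushforwards of powers $\nu_{H_0}^{\times k}$ under diagonal embeddings of $\Reals^k$ into $\Reals^p$ determined by the blocks of projections, with coefficients that are polynomials in all the moments $\tau(V),\dots,\tau(V^p)$ --- not only in $\tau(V)$ and $\tau(V^2)$ as you assert.

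With that correction the rest of your argument goes through: each diagonal pushforward is an honest finite positive Borel measure, so the linear combination is countably additive and of bounded variation, and the conclusion follows. Note also that your opening remark --- that bounded variation together with additivity on rectangles ``forces'' countable additivity by a standard extension --- is not a usable shortcut: a finitely additive set function of bounded variation on the semiring of rectangles need not be countably additive without a regularity or compactness input. What actually produces the extension here, both in your sketch and in \cite[Theorem 4.5]{ds}, is the explicit freeness formula exhibiting $m_{p,H_0,V}$ as a combination of genuine Borel measures; once you fix the missing diagonal terms, that exhibition is the whole proof.
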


Upon evaluating a trace, some iterated operator integrals can be written as
Lebesgue integrals with respect to ``multiple spectral measures".

\begin{prop}$($See \cite[Lemma 4.9]{ds}.$)$ \label{prop:i_to_m_g}
Assume the hypothesis of Proposition \ref{prop:m}.
Assume that the spectral measures $E_1,E_2,\dots,E_p$ correspond to
self-adjoint operators $H_0,H_1,\dots,H_p$ affiliated with $\Mcal$,
respectively, and that $V_1,V_2,\dots,V_p\in\ncs{2}$. Let $f_1,f_2,\dots,f_p$
be functions in $C_b(\Reals)$ (continuous bounded). Then
\[\tau[f_1(H_1)V_1 f_2(H_2)V_2\dots f_p(H_p)V_p]=
\int_{\Reals^p}f_1(\la_1)f_2(\la_2)\dots f_p(\la_p)\,dm(\la_1,\la_2,\dots,\la_p),\]
with $m$ as in Proposition \ref{prop:m}.
\end{prop}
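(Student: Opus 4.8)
The plan is to prove the identity
\[
\tau[f_1(H_1)V_1 f_2(H_2)V_2\dots f_p(H_p)V_p]=
\int_{\Reals^p}f_1(\la_1)\dots f_p(\la_p)\,dm(\la_1,\dots,\la_p)
\]
first for a dense class of simple (step) functions and then to pass to the limit by approximation. The left-hand side, viewed as a functional of the tuple $(f_1,\dots,f_p)$, and the right-hand side are both multilinear; the whole argument is essentially a verification that two multilinear functionals agree on a spanning set and are jointly continuous in the relevant topology, so they agree everywhere on $C_0(\Reals)^p$.

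First I would establish the identity on indicator functions. Take $f_j=\chi_{A_j}$ for Borel sets $A_j\subseteq\Reals$. Then $f_j(H_j)=E_j(A_j)$ by the spectral theorem, so the left-hand side becomes $\tau[E_1(A_1)V_1\dots E_p(A_p)V_p]$, which is exactly the value $m(A_1\times\dots\times A_p)$ prescribed in Proposition \ref{prop:m}. Meanwhile the right-hand side is $\int_{\Reals^p}\chi_{A_1}(\la_1)\dots\chi_{A_p}(\la_p)\,dm=m(A_1\times\dots\times A_p)$ by definition of the integral of an indicator of a rectangle against $m$. Hence the two sides coincide on products of indicators, and by multilinearity they coincide whenever every $f_j$ is a simple function (a finite linear combination of indicators). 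This step is routine given that $m$ exists as a genuine countably additive measure, which is precisely what Proposition \ref{prop:m} supplies.

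Next I would promote this from simple functions to all of $C_0(\Reals)$ by a density-plus-continuity argument. Fix $f_j\in C_0(\Reals)$ and choose uniformly bounded sequences of simple functions $f_j^{(n)}\to f_j$ pointwise (say, with $\sup_n\norm{f_j^{(n)}}_\infty\leq\norm{f_j}_\infty$). On the right, dominated convergence against the finite measure $m$ gives convergence of the integrals. On the left, I would use that $f_j^{(n)}(H_j)\to f_j(H_j)$ in the strong operator topology together with the Hilbert-Schmidt membership of the $V_j$: since each $V_j\in\ncs{2}$, the products telescope and one controls differences in $\ncs{2}$ (hence in $\tau$-trace norm after pairing with the remaining Hilbert-Schmidt factors) using $\norm{AB}_1\leq\norm{A}_2\norm{B}_2$ and $\norm{f_j^{(n)}(H_j)}_\infty\leq\norm{f_j}_\infty$. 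The uniform bound on the spectral multipliers keeps the whole product bounded while strong convergence of one factor at a time, combined with Hilbert-Schmidt continuity of the neighboring $V_j$, forces the trace to converge. Passing to the limit on both sides yields the identity for $f_1,\dots,f_p\in C_0(\Reals)$.

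The main obstacle I anticipate is the continuity step on the operator-integral side: one must argue that strong-operator convergence of the bounded multipliers $f_j^{(n)}(H_j)$ upgrades to convergence of the full trace $\tau[f_1^{(n)}(H_1)V_1\dots f_p^{(n)}(H_p)V_p]$, and this is not automatic because strong convergence alone does not pass through a trace. The resolution is to exploit that the factors $V_j$ live in $\ncs{2}$, so a typical building block $E_1 V_1 E_2 V_2$ already lies in the trace ideal $\ncl{1}$ with norm controlled by $\norm{V_1}_2\norm{V_2}_2$; replacing one multiplier at a time and using that $(f_j^{(n)}(H_j)-f_j(H_j))V_j\to 0$ in $\ncs{2}$ (strong convergence composed with a Hilbert-Schmidt operator converges in $\ncs{2}$) lets one estimate the difference of traces by a sum of terms each bounded by a Hilbert-Schmidt norm tending to zero. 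Carrying this telescoping estimate carefully, together with the trace-norm Hölder inequality, is the technical heart of the proof; everything else is bookkeeping in the multilinear and measure-theoretic structure already in place.
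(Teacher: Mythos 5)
Your proof is correct, but note at the outset that this paper contains no proof of the proposition at all: it is quoted verbatim from \cite{ds} (Lemma 4.9), so the comparison is with that source's argument, which follows the same skeleton as yours in the first two steps --- agreement on indicators is immediate from the defining property of $m$ on rectangles, and multilinearity extends this to simple functions --- but diverges at the limiting step. You approximate each $f_j$ pointwise boundedly and pass through strong operator convergence of $f_j^{(n)}(H_j)$, which commits you to the genuinely technical lemma that $\norm{S_nW}_2\to 0$ whenever $S_n\to 0$ strongly with $\sup_n\norm{S_n}<\infty$ and $W\in\ncs{2}$. This lemma is true, but in the general semifinite setting it is not proved by finite-rank approximation as in $\BH$; the clean argument is to write $\norm{S_nW}_2^2=\tau(S_n^*S_nWW^*)$, note that $WW^*\in\ncl{1}$ induces a normal functional on $\Mcal$, and use that $S_n^*S_n\to 0$ $\sigma$-weakly (bounded sequences converging in the weak operator topology converge $\sigma$-weakly). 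With that supplied, your telescoping and H\"older bookkeeping ($\norm{AB}_1\le\norm{A}_2\norm{B}_2$, with $p\ge 2$ guaranteeing a second Hilbert--Schmidt factor to pair with, and the $j=p$ term handled by pairing with the factor $V_1$ on the left or by cyclicity) goes through. The simpler route, however, and the natural one here, is to observe that every bounded Borel function --- in particular every $f_j\in C_0(\Reals)$ --- is a \emph{uniform} limit of simple functions (partition the range into sets of small diameter), so that $\norm{f_j^{(n)}(H_j)-f_j(H_j)}\le\norm{f_j^{(n)}-f_j}_\infty\to 0$ in operator norm; then both sides of the identity converge by trivial norm estimates, against $\norm{V_1}_2\cdots\norm{V_p}_2$ on the left and against the finite total variation of $m$ on the right, and no strong-convergence machinery is needed at all. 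Your heavier route buys nothing extra --- uniform approximation already yields the identity for all bounded Borel $f_j$, not just $C_0(\Reals)$ --- but it does no harm, and with the $\sigma$-weak justification made explicit your proof is complete.
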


\begin{remark}\label{prop:i to m_r}
A completely analogous result with $m$ replaced by $m_{p,H_0,V}$ or $m_{p,H_0,V}^{(1)}$ holds under the hypothesis of Proposition \ref{prop:m} or Proposition \ref{prop:m_free}.
\end{remark}


\subsection{Reduction of traces of multiple operator integrals to scalar integrals}

\begin{defi}(\cite[Definition 4.1]{Azamov0}; see also \cite{PellerMult})\label{moi}
Let $H_k=H_k^*$ and $V_k=V_k^*$, with $k=1,\dots,p+1$, be operators defined in $\Hcal$. Assume that $V_k$, $k=1,\dots,p+1$, are bounded. Let $\phi$ be a function representable in the form \begin{align}
\label{f-la:dec}
\phi(\la_1,\la_2,\dots,\la_p,\la_{p+1})=
\int_S\al_1(\la_1,s)\al_2(\la_2,s)\dots\al_p(\la_p,s)\al_{p+1}(\la_{p+1},s)\,d\sigma(s),
\end{align} where $(S,\sigma)$ is a finite measure space and $\al_1,\dots,\al_{p+1}$ are bounded Borel functions on $\Reals\times S$. Then the multiple operator integral
\[\int_{\Reals^{p+1}}\phi(\la_1,\la_2,\dots,\la_p,\la_{p+1})\,
dE_{H_1}(\la_1)V_1\,dE_{H_2}(\la_2)V_2\dots
dE_{H_p}(\la_p)V_p\,dE_{H_{p+1}}(\la_{p+1})\] is defined as the Bochner integral
\begin{align*}
\int_S\al_1(H_1,s)V_1\al_2(H_2,s)V_2\dots\al_p(H_p,s)V_p\,\al_{p+1}(H_{p+1},s)\,d\sigma(s).
\end{align*}
\end{defi}

When the set functions $m$ and $m_1$ admit extensions to finite countably additive measures,
a trace of a multiple operator integral can be represented as a multiple scalar integral.

\begin{lemma}\label{prop:one}
Let $H_1,\dots,H_{p+1}$ be self-adjoint operators affiliated with $\Mcal$ and $V_1,\dots,V_p$ self-adjoint operators in $\ncs{2}$. Let $E_k=E_{H_k}$, for $k=1,\dots,p+1$, and let $\phi$ be a bounded Borel function admitting the representation \eqref{f-la:dec}. Then, the following representations hold.
\begin{enumerate}
\item \label{f-la:one}
For $m_1$ the measure provided by Proposition \ref{prop:m1} or Proposition \ref{prop:m_free},
\begin{align*}
&\tau\left[\int_{\Reals^{p+1}}\phi(\la_1,\dots,\la_p,\la_{p+1})\,dE_{H_1}(\la_1)V_1\dots
dE_{H_p}(\la_p)V_p\,dE_{H_{p+1}}(\la_{p+1})\right]\\
&\quad=\int_{\Reals^{p+1}}\phi(\la_1,\dots,\la_p,\la_{p+1})\,dm_1(\la_1,\dots,\la_p,\la_{p+1}).
\end{align*}
\item \label{f-la:one'}
In the case $H_{p+1}=H_1$, for $m$ the measure provided by Proposition \ref{prop:m} or Proposition \ref{prop:m_free},
\begin{align*}
&\tau\left[\int_{\Reals^{p+1}}\phi(\la_1,\la_2,\dots,\la_p,\la_{p+1})\,dE_{H_1}(\la_1)V_1
dE_{H_2}(\la_2)V_2\dots dE_{H_p}(\la_p)V_p\,dE_{H_1}(\la_{p+1})\right]\\
&\quad=\int_{\Reals^p}\phi(\la_1,\la_2,\dots,\la_p,\la_1)\,dm(\la_1,\la_2,\dots,\la_p).
\end{align*}

\item \label{f-la:one''}
For $m$ the measure provided by Proposition \ref{prop:m} or Proposition \ref{prop:m_free},
\begin{align*}
&\tau\left[V_{p+1}\int_{\Reals^{p+1}}\phi(\la_1,\dots,\la_p,\la_{p+1})\,dE_{H_1}(\la_1)V_1
\dots dE_{H_p}(\la_p)V_p\,dE_{H_{p+1}}(\la_{p+1})\right]\\
&\quad=\int_{\Reals^p}\phi(\la_1,\dots,\la_p,\la_{p+1})\,dm(\la_1,\dots,
\la_p,\la_{p+1}).
\end{align*}
\end{enumerate}
\end{lemma}

\begin{proof}
(1) By \cite[Lemma 3.10 and Remark 4.2]{Azamov0},
\begin{align*}
&\tau\left[\int_S\al_1(H_1,s)V_1\dots \al_p(H_p,s)V_p\,\al_{p+1}(H_{p+1},s)\,d\sigma(s)\right]\\
&\quad=\int_S\tau\left[\al_1(H_1,s)V_1\dots \al_p(H_p,s)V_p\,\al_{p+1}(H_{p+1},s)\right]\,d\sigma(s).
\end{align*}
By Remark \ref{prop:i to m_r}, the latter integral equals
\begin{align*}
\int_S\int_{\Reals^{p+1}}\al_1(\la_1,s)\dots
\al_p(\la_p,s)\al_{p+1}(\la_{p+1},s)\,dm_1(\la_1,\dots,\la_p,\la_{p+1})\,d\sigma(s),
\end{align*} which by Fubini's theorem converts to
\begin{align*}
&\int_{\Reals^{p+1}}\int_S\al_1(\la_1,s)\dots \al_p(\la_p,s)\al_{p+1}(\la_{p+1},s)\,d\sigma(s)\,dm_1(\la_1,\dots,\la_p,\la_{p+1})\\
&\quad=\int_{\Reals^{p+1}}\phi(\la_1,\dots,\la_p,\la_{p+1})\,dm_1(\la_1,\dots,\la_p,\la_{p+1}).
\end{align*}
(2) By \cite[Lemma 3.10 and Remark 4.2]{Azamov0} and cyclicity of the trace,
\begin{align*}
&\tau\left[\int_S\al_1(H_1,s)V_1\al_2(H_2,s)\dots V_p\,\al_{p+1}(H_{p+1},s)\,d\sigma(s)\right]\\
&\quad=\int_S\tau\left[\al_{p+1}(H_1,s)\al_1(H_1,s)V_1\al_2(H_2,s)\dots V_p\right]\,d\sigma(s).
\end{align*}
By Proposition \ref{prop:i_to_m_g}, the latter integral equals
\begin{align*}
\int_S\int_{\Reals^p}\al_{p+1}(\la_1,s)\al_1(\la_1,s)\al_2(\la_2,s)
\dots\al_p(\la_p,s)\,dm(\la_1,\la_2,\dots,\la_p)\,d\sigma(s),
\end{align*}which by Fubini's theorem converts to
\begin{align*}
&\int_{\Reals^p}\int_S\al_1(\la_1,s)\al_2(\la_2,s)
\dots\al_p(\la_p,s)\al_{p+1}(\la_1,s)\,d\sigma(s)\,dm(\la_1,\la_2,\dots,\la_p)\\
&\quad=\int_{\Reals^p}\phi(\la_1,\la_2,\dots,\la_p,\la_1)\,dm(\la_1,\la_2\dots,\la_p).
\end{align*}
(3) By \cite[Lemma 3.7]{Azamov0}
\begin{align*}
&\tau\left[V_{p+1}\int_{\Reals^{p+1}}\phi(\la_1,\dots,\la_p,\la_{p+1})\,dE_{H_1}(\la_1)V_1
\dots dE_{H_p}(\la_p)V_p\,dE_{H_{p+1}}(\la_{p+1})\right]\\
&\quad=\tau\left[\int_S V_{p+1}\al_1(H_1,s)V_1\dots\al_p(H_p,s)V_p\al_{p+1}(H_{p+1},s)\,d\sigma(s)\right].
\end{align*} By following the lines of the proof of \eqref{f-la:one}, we obtain that the latter equals
\begin{align*}
&\int_S\tau\left[V_{p+1}\al_1(H_1,s)V_1\dots\al_p(H_p,s)V_p\al_{p+1}(H_{p+1},s)\right]\,d\sigma(s)\\
&\quad=\int_{\Reals^p}\phi(\la_1,\dots,\la_p,\la_{p+1})\,dm(\la_1,\dots,
\la_p,\la_{p+1}).
\end{align*}
\end{proof}

\begin{remark}
If in the statement of Lemma \ref{prop:one} \eqref{f-la:one'} we change the assumption $V_1,\dots,V_p\in\ncs{2}$ to $V_1,\dots,V_p\in\ncs{p}$, then we obtain
\begin{align*}
&\tau\left[\int_{\Reals^{p+1}}\phi(\la_1,\la_2,\dots,\la_p,\la_{p+1})\,dE_{H_1}(\la_1)V_1
dE_{H_2}(\la_2)V_2\dots dE_{H_p}(\la_p)V_p\,dE_{H_1}(\la_{p+1})\right]\\
&\quad=\tau\left[\int_{\Reals^p}\phi(\la_1,\la_1,\la_2,\dots,\la_p)\,dE_{H_1}(\la_1)V_1
dE_{H_2}(\la_2)V_2\dots dE_{H_p}(\la_p)V_p\right].
\end{align*}
\end{remark}

We have the following representation for the derivative $\frac{d^p}{dx^p}f(H_0+xV)$.

\begin{prop}\label{prop:der}(\cite[Theorem 5.6]{PellerMult}; see also \cite[Theorem 5.7]{Azamov0})
Let $H_0=H_0^*$ be an operator affiliated with $\Mcal$ and $V=V^*$ an operator in $\Mcal$. Then for every $f$ in $\mathcal{W}_p(\Reals)$,
\[\frac{d^p}{dx^p}\bigg|_{x=0}f(H_0+xV)=p!\int_{\Reals^{p+1}}\dd{\la_{p+1}}{p}{f}\,
dE_{H_0}(\la_1)V\dots V\,dE_{H_0}(\la_{p+1}).\]
\end{prop}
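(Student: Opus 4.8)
The plan is to establish the formula
\[
\frac{d^p}{dx^p}\bigg|_{x=0}f(H_0+xV)=p!\int_{\Reals^{p+1}}\dd{\la_{p+1}}{p}{f}\,
dE_{H_0}(\la_1)V\dots V\,dE_{H_0}(\la_{p+1})
\]
first for a single exponential $f(\la)=e^{\i t\la}$ and then to extend to all of $\mathcal{W}_p(\Reals)$ by integrating against the Fourier data $\mu_f$. I would begin by fixing $t\in\Reals$ and computing $\frac{d^p}{dx^p}\big|_{x=0}e^{\i t H_x}$ directly, where $H_x=H_0+xV$. The map $x\mapsto e^{\i t H_x}$ can be differentiated via Duhamel's formula: the first derivative is
\[
\frac{d}{dx}e^{\i tH_x}=\i\int_0^t e^{\i sH_x}Ve^{\i(t-s)H_x}\,ds,
\]
and iterating this $p$ times at $x=0$ produces a $p$-fold iterated integral over a simplex in the variables $s_1,\dots,s_p$ of a product $e^{\i s_0 H_0}Ve^{\i s_1 H_0}V\cdots Ve^{\i s_p H_0}$ (with appropriate time increments). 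The key analytic point in this step is justifying that these Duhamel derivatives exist in the relevant operator topology and that the Bochner integrals converge; since $V\in\Mcal$ is bounded and the exponentials are uniformly bounded in norm, this is routine, and it is precisely the content already packaged in the cited references \cite{PellerMult,Azamov0}.

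The heart of the proof is to recognize the resulting simplex integral of exponentials as the multiple operator integral of a divided difference. I would invoke Proposition \ref{prop:dd_wp}: for the exponential $f_t(\la)=e^{\i t\la}$, the divided difference $\dd{\la_{p+1}}{p}{f_t}$ is exactly the integral over the region $\Pi^{(p)}$ of the product $e^{\i(s_0-s_1)\la_1}\cdots e^{\i(s_{p-1}-s_p)\la_p}e^{\i s_p\la_{p+1}}$ against the measure $d\sigma_{f_t}^{(p)}$. This is the canonical representation \eqref{f-la:dec} of the symbol $\phi=\dd{\la_{p+1}}{p}{f_t}$ demanded in Definition \ref{moi}, with the factors $\al_k(\la_k,s)=e^{\i(s_{k-1}-s_k)\la_k}$ separating the variables. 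Substituting this representation into the definition of the multiple operator integral turns the right-hand side into $\int_{\Pi^{(p)}}e^{\i s_0 H_0}Ve^{\i(s_1-s_0)H_0}V\cdots d\sigma$, which, after matching the parametrization of $\Pi^{(p)}$ to the simplex arising from Duhamel's formula, coincides (up to the combinatorial factor $p!$) with the iterated Duhamel expression computed in the first step.

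Once the identity holds for each $f_t$, I would pass to general $f\in\mathcal{W}_p(\Reals)$ by writing $f(\la)=\int_\Reals e^{\i t\la}\,d\mu_f(t)$ and integrating the established formula against $d\mu_f(t)$. Both sides are linear and continuous in $f$ relative to this Fourier representation: the left-hand operator derivative commutes with the integral over $t$ by the finiteness of $\mu_f$ together with uniform norm bounds on the Duhamel derivatives, and the right-hand side reassembles because the divided difference itself is linear in $f$ and $\dd{\la_{p+1}}{p}{f}=\int_\Reals\dd{\la_{p+1}}{p}{f_t}\,d\mu_f(t)$. I expect the main obstacle to be the bookkeeping in the second step: correctly aligning the simplex variables and sign conventions of $\Pi^{(p)}$ with the ordered time variables produced by iterating Duhamel's formula, and verifying that the symmetry of the divided difference (Proposition \ref{prop:dds}\eqref{f-la:sim}) accounts for the factor $p!$ rather than an unwanted permutation sum. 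The interchange of the operator derivative with the Fourier integral is the other technical point, but it is controlled by dominated convergence once the uniform bounds from the Duhamel estimates are in hand.
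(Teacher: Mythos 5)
The paper itself offers no proof of Proposition \ref{prop:der} --- it is imported verbatim from \cite[Theorem 5.6]{PellerMult} and \cite[Theorem 5.7]{Azamov0} --- so the comparison here is with the argument of \cite{Azamov0}, and your outline is essentially a reconstruction of it: Duhamel's formula for $x\mapsto e^{\i tH_x}$, identification of the iterated simplex integral with the multiple operator integral of $\dd{\la_{p+1}}{p}{e^{\i t\cdot}}$ through the $\Pi^{(p)}$-representation of Proposition \ref{prop:dd_wp} (which for $f_t=e^{\i t\cdot}$ has $\mu_{f_t}=\delta_t$, so the $s_0$-integration collapses), and Fourier synthesis over $d\mu_f$. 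The skeleton is sound, and the bookkeeping you worry about does close up: iterating Duhamel $p$ times yields $p!$ ordered terms which, under the change of variables $(s_1,\dots,s_p)\mapsto(s_0-s_1,s_1-s_2,\dots,s_p)$ matching $\Pi^{(p)}$ with the simplex $\{a_0,\dots,a_p\ge 0,\ \sum_j a_j=t\}$, all parametrize the same simplex integral; the factor $p!$ comes from this counting alone, and no appeal to the symmetry of the divided difference is needed at that point.

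One justification is too weak as stated. For the final interchange you invoke ``finiteness of $\mu_f$ together with uniform norm bounds on the Duhamel derivatives,'' but those bounds are not uniform in $t$: the $p$-th Duhamel derivative of $e^{\i tH_x}$ is only bounded by $|t|^p\norm{V}^p$ (simplex volume $|t|^p/p!$ times $p!\,\norm{V}^p$), so the dominated-convergence argument requires $\int_\Reals|t|^p\,d|\mu_f|(t)<\infty$, which does not follow from finiteness of $\mu_f$ alone. This is precisely where the full strength of $f\in\mathcal{W}_p(\Reals)$ enters: since $f^{(j)}$ is the Fourier transform of a finite measure $\mu_{f^{(j)}}$ for \emph{every} $j=0,\dots,p$, uniqueness of Fourier transforms of tempered distributions forces $d\mu_{f^{(j)}}(t)=(\i t)^j\,d\mu_f(t)$, whence the $p$-th absolute moment of $\mu_f$ is finite. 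With that replacement your extension step --- and hence the whole argument --- goes through, in agreement with the cited proof in \cite{Azamov0}.
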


The main assumptions of the following results are collected in the format of a hypothesis.

\begin{hyp}\label{hyp}Let $H_0=H_0^*$ be affiliated with $\Mcal$ and $V=V^*\in\ncs{2}$.
Assume that one of the following three conditions is satisfied:
\begin{enumerate}
\item \label{one} $\Mcal=\BH$, $p\geq 2$,
\item \label{two} $2\leq p\leq 3$,
\item \label{three} $\Mcal$ is finite, $p\geq 2$, and $(zI-H_0)^{-1}$ and $V$ are free in $(\Mcal,\tau)$.
\end{enumerate}
\end{hyp}

In the multiple operator integral representation for the derivative $\frac{d^p}{dx^p}f(H_0+xV)$ provided by Proposition \ref{prop:der}, the order of the divided difference can be reduced upon evaluating the trace.

\begin{thm}\label{prop:two}
Assume Hypothesis \ref{hyp}. Then for $f\in\mathcal{W}_p(\Reals)$,
\begin{align}
\label{f-la:two}
\tau\left[\frac{d^p}{dx^p}\bigg|_{x=0}f(H_0+xV)\right]&=
p!\int_{\Reals^{p+1}}\dd{\la_{p+1}}{p}{f}\,dm_{p,H_0,V}^{(1)}(\la_1,\dots,\la_{p+1})\\
\label{f-la:two'}
&=(p-1)!\int_{\Reals^p}\De_{\la_1,\dots,\la_p}^{(p-1)}(f')\,
dm_{p,H_0,V}(\la_1,\dots,\la_p).
\end{align}
\end{thm}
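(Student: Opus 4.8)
The plan is to take the trace of the multiple operator integral representation of $\frac{d^p}{dx^p}\big|_{x=0}f(H_0+xV)$ given in Proposition \ref{prop:der} and to evaluate it via the two parts of Lemma \ref{prop:one}. The first thing to verify is that, for $f\in\mathcal{W}_p(\Reals)$, the integrand $\dd{\la_{p+1}}{p}{f}$ is a bounded Borel function admitting the representation \eqref{f-la:dec}; this is exactly Proposition \ref{prop:dd_wp}, with $(S,\sigma)=(\Pi^{(p)},\sigma_f^{(p)})$ a finite measure space and with the separated factors the uniformly bounded exponentials $e^{\i(s_{k-1}-s_k)\la_k}$ and $e^{\i s_p\la_{p+1}}$. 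Hence the integral in Proposition \ref{prop:der} is a genuine multiple operator integral in the sense of Definition \ref{moi}, and under Hypothesis \ref{hyp} the multiple spectral measures $m^{(1)}_{p,H_0,V}$ and $m_{p,H_0,V}$ exist as finite countably additive measures (by Propositions \ref{prop:m} and \ref{prop:m1} when $\Mcal=\BH$, by Proposition \ref{prop:m_free} in the free case, and, for $p=3$ and general $\Mcal$, by the positive bounded-variation result of \cite{ds} mentioned after Proposition \ref{prop:m}).

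The identity \eqref{f-la:two} then follows at once. Taking $H_1=\dots=H_{p+1}=H_0$ and $V_1=\dots=V_p=V$ and applying Lemma \ref{prop:one} \eqref{f-la:one} to the integral of Proposition \ref{prop:der}, the trace equals $\int_{\Reals^{p+1}}\dd{\la_{p+1}}{p}{f}\,dm^{(1)}_{p,H_0,V}$; multiplying by $p!$ yields \eqref{f-la:two}.

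For \eqref{f-la:two'} I would instead apply Lemma \ref{prop:one} \eqref{f-la:one'}, which is available because $H_{p+1}=H_1=H_0$ and which collapses the last knot onto the first:
\[
\tau\left[\frac{d^p}{dx^p}\bigg|_{x=0}f(H_0+xV)\right]
=p!\int_{\Reals^p}\De_{\la_1,\dots,\la_p,\la_1}^{(p)}(f)\,dm_{p,H_0,V}(\la_1,\dots,\la_p).
\]
The crux is to symmetrize the doubled knot. Cyclicity of the trace shows that $m_{p,H_0,V}$ is invariant under the cyclic shift $(\la_1,\dots,\la_p)\mapsto(\la_2,\dots,\la_p,\la_1)$, so that integration against $m_{p,H_0,V}$ is unchanged under any cyclic relabelling of the variables. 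Combining this invariance with the full symmetry of the divided difference (Proposition \ref{prop:dds} \eqref{f-la:sim}) gives $\int\De_{\la_1,\dots,\la_p,\la_1}^{(p)}(f)\,dm_{p,H_0,V}=\int\De_{\la_1,\dots,\la_p,\la_i}^{(p)}(f)\,dm_{p,H_0,V}$ for every $i\in\{1,\dots,p\}$. Averaging these $p$ identities and invoking Lemma \ref{prop:dd2}, the summed integrand collapses to $\De_{\la_1,\dots,\la_p}^{(p-1)}(f')$, and the constant $p!\cdot\frac1p=(p-1)!$ produces \eqref{f-la:two'}.

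The step I expect to require the most care is this symmetrization: one must check that the cyclic invariance of $m_{p,H_0,V}$, evident on rectangles from the cyclicity of $\tau$, persists for the countably additive extension and hence for integration of the continuous integrands $\De_{\la_1,\dots,\la_p,\la_i}^{(p)}(f)$, and that the identity of Lemma \ref{prop:dd2} may be integrated term by term. Continuity of the divided differences (Proposition \ref{prop:dds} \eqref{f-la:dd1}) together with their boundedness (from the representation of Proposition \ref{prop:dd_wp}) and the finiteness of the measures make each term integrable, so interchanging the finite sum with the integral is harmless. The remaining delicate point is purely one of availability of the measures: case \eqref{two} of Hypothesis \ref{hyp} lies outside the scope of Propositions \ref{prop:m} and \ref{prop:m1}, so the existence of the relevant measures there must be secured by the sharper analysis of \cite{ds} for $p=3$.
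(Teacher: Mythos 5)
Your proposal is correct and follows essentially the same route as the paper's own proof: Proposition \ref{prop:dd_wp} supplies the separated representation \eqref{f-la:dec} for $\dd{\la_{p+1}}{p}{f}$, Proposition \ref{prop:der} combined with the two parts of Lemma \ref{prop:one} yields \eqref{f-la:two} and the doubled-knot integral $p!\int_{\Reals^p}\De^{(p)}_{\la_1,\dots,\la_p,\la_1}(f)\,dm_{p,H_0,V}$, and your symmetrization---cyclic invariance of $m_{p,H_0,V}$ inherited from cyclicity of $\tau$, symmetry of the divided difference, averaging over $i$, and Lemma \ref{prop:dd2} with the constant $p!\cdot\frac1p=(p-1)!$---is precisely the paper's chain \eqref{f-la:two1}--\eqref{f-la:two4}. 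The only blemish is peripheral and shared with the paper: your attribution of the existence of the order-$3$ measures for general $\Mcal$ to a ``positive bounded-variation result of \cite{ds} for $p=3$'' overstates what is available (the positive result there is for $p=2$, and Remark \ref{rem:unbdd_det} records that $m_{3,H_0,V}$ can fail to extend to a finite measure even for finite $\tau$), but the paper's proof is equally silent on this point, resting as yours does on Lemma \ref{prop:one} under Hypothesis \ref{hyp}.
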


\begin{proof}
By Proposition \ref{prop:dd_wp}, the function $\phi(\la_1,\dots,\la_{p+1})=\dd{\la_{p+1}}{p}{f}$ admits the representation \eqref{f-la:dec},
where $\al_1(\la_1,s)=e^{\i(s_0-s_1)\la_1}$, ..., $\al_p(\la_p,s)=e^{\i(s_{p-1}-s_p)\la_p}$, and $\al_{p+1}(\la_{p+1},s)=e^{\i s_p\la_{p+1}}$. It follows from Proposition \ref{prop:der} and Lemma \ref{prop:one} that
\begin{align}\nonumber
&\tau\left[\frac{d^p}{dx^p}\bigg|_{x=0}f(H_0+xV)\right]\\
\nonumber
&\quad=\tau\left[\int_{\Reals^{p+1}}\Delta_{\la_1,\la_2,\dots,\la_p,\la_{p+1}}^{(p)}(f)
dE_{H_0}(\la_1)VdE_{H_0}(\la_2)V\dots VdE_{H_0}(\la_{p+1})\right]\\
\label{f-la:twopr}
&\quad=\int_{\Reals^p}\Delta_{\la_1,\la_2,\dots,\la_p,\la_{p+1}}^{(p)}(f)\,
dm_{p,H_0,V}^{(1)}(\la_1,\la_2,\dots,\la_p,\la_{p+1})\\
\label{f-la:twopr'}
&\quad=\int_{\Reals^p}\Delta_{\la_1,\la_2,\dots,\la_p,\la_1}^{(p)}(f)\,
dm_{p,H_0,V}(\la_1,\la_2,\dots,\la_p).
\end{align}
Proposition \ref{prop:der} and the representation \eqref{f-la:twopr} imply \eqref{f-la:two}.

To prove that the expressions in \eqref{f-la:two} and \eqref{f-la:two'} are equal, we note first that a trivial renumbering of the variables of integration gives
\begin{align}
\nonumber
&\int_{\Reals^p}\Delta_{\la_1,\la_2,\dots,\la_p,\la_1}^{(p)}(f)\,
dm_{p,H_0,V}(\la_1,\la_2,\dots,\la_p)\\
\label{f-la:two1}
&\quad=\int_{\Reals^p}\Delta_{\la_i,\la_{i+1},\dots,\la_p,\la_1,\dots,\la_{i-1},\la_i}^{(p)}(f)\,
dm_{p,H_0,V}(\la_i,\la_{i+1},\dots,\la_p,\la_1,\dots,\la_{i-1}).
\end{align}
Cyclicity of the trace $\tau$ ensures cyclicity of the measure $m_{p,H_0,V}$, that is,
\begin{align}
\label{f-la:two2}
dm_{p,H_0,V}(\la_i,\la_{i+1},\dots,\la_p,\la_1,\dots,\la_{i-1})=
dm_{p,H_0,V}(\la_1,\dots,\la_{i-1},\la_i,\la_{i+1},\dots,\la_p).
\end{align}
Symmetry of the divided difference (see Proposition \ref{prop:dds} \eqref{f-la:sim}) along with \eqref{f-la:two1} and \eqref{f-la:two2} ensures the equality
\begin{align}
\nonumber
&\int_{\Reals^p}\Delta_{\la_1,\la_2,\dots,\la_p,\la_1}^{(p)}(f)\,
dm_{p,H_0,V}(\la_1,\la_2,\dots,\la_p)\\
\label{f-la:two3}
&\quad=\int_{\Reals^p}\Delta_{\la_1,\la_2,\dots,\la_p,\la_i}^{(p)}(f)\,
dm_{p,H_0,V}(\la_1,\la_2,\dots,\la_p).
\end{align}
It follows from \eqref{f-la:two3} and Lemma \ref{prop:dd2} that
\begin{align}
\nonumber
&p\int_{\Reals^p}\Delta_{\la_1,\la_2,\dots,\la_p,\la_1}^{(p)}(f)\,
dm_{p,H_0,V}(\la_1,\la_2,\dots,\la_p)\\
\nonumber
&\quad=\sum_{i=1}^p\int_{\Reals^p}\Delta_{\la_1,\la_2,\dots,\la_p,\la_i}^{(p)}(f)\,
dm_{p,H_0,V}(\la_1,\la_2,\dots,\la_p)\\
\label{f-la:two4}
&\quad=\int_{\Reals^p}\De_{\la_1,\dots,\la_p}^{(p-1)}(f')\,
dm_{p,H_0,V}(\la_1,\dots,\la_p).
\end{align}
Combination of \eqref{f-la:twopr'} and \eqref{f-la:two4} completes the proof of the theorem.
\end{proof}

As an application of Theorem \ref{prop:two}, we obtain positivity of Koplienko's spectral shift function in the von Neumann algebra setting. In the $\BH$ setting, positivity of $\eta=\eta_2$ was obtained in \cite{Kop84} and in the extended setting for $V\in\ncs{1}$ in \cite{convexity}.

\begin{cor}\label{prop:pos}
Let $H_0=H_0^*$ be affiliated with $\Mcal$ and $V=V^*\in\ncs{2}$. Then $\eta_2\geq 0$.
\end{cor}

\begin{proof}
Due to Koplienko's trace formula \eqref{f-la:5}, it is enough to show that
\[\tau\left[f(H_1)-f(H_0)-\frac{d}{dx}\bigg|_{x=0}f(H_0+xV)\right]\geq 0\] for every $f\in C_c^3(\Reals)\subset\mathcal{W}_2(\Reals)$, with $f''\geq 0$.
We have the integral representation
\begin{align}\label{f-la:irr1}
\tau\left[f(H_1)-f(H_0)-\frac{d}{dx}\bigg|_{x=0}f(H_0+xV)\right]
=\int_0^1(1-x)\tau\left[\frac{d^2}{dx^2}f(H_0+xV)\right]dx
\end{align} (see, e.g., \cite[Theorem 11 and Lemma 3.11]{ds}.) Further, by \eqref{f-la:two1} of Theorem \ref{prop:two} with $p=2$, for every $f\in\mathcal{W}_2(\Reals)$,
\begin{align}\label{f-la:irr2}
\tau\left[\frac{d^2}{dx^2}f(H_0+xV)\right]=\int_{\Reals^2}\De^{(1)}_{\la_0,\la_1}(f')\,
dm_{2,H_0+xV,V}(\la_0,\la_1).
\end{align}
It is easy to derive that the measure $m_{2,H_0+xV,V}$ is non-negative, for every $x\in [0,1]$ (see, e.g., \cite[Lemma 4.7]{ds}). If $f''\geq 0$, then $f'$ is increasing and $\De^{(1)}_{\la_0,\la_1}(f')\geq 0$ for all $\la_0,\la_1$. (The latter follows, for instance, from Proposition \ref{prop:sps}). Thus, if $f''\geq 0$, then the expressions in \eqref{f-la:irr2} and \eqref{f-la:irr1} are non-negative, which completes the proof.
\end{proof}

\begin{cor}\label{prop:two_cor}
Assume Hypothesis \ref{hyp}. Assume, in addition, that $H_0$ is bounded. Let $[a,b]$ be a segment containing $\sigma(H_0)\cup\sigma(H_0+V)$. Then for $f\in\mathcal{W}_p(\Reals)\cap C^{p+1}(\Reals)$,
\begin{align}
\nonumber
&\tau\left[\frac{d^p}{dx^p}\bigg|_{x=0}f(H_0+xV)\right]-\tau(V^p)f^{(p)}(a)\\
\label{f-la:finite5}
&\quad=\int_a^b f^{(p+1)}(t)\int_{[a,b]^{p+1}}\dd{\la_{p+1}}{p}{(\la-t)_+^{p}}\,
dm_{p,H_0,V}^{(1)}(\la_1,\dots,\la_{p+1})\,dt\\
\label{f-la:finite5'}
&\quad=\int_a^b f^{(p+1)}(t)\int_{[a,b]^p}\dd{\la_p}{p-1}{(\la-t)_+^{p-1}}\,
dm_{p,H_0,V}(\la_1,\dots,\la_p)\,dt.
\end{align}
\end{cor}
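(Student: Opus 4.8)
The plan is to run each of the two scalar-integral representations of Theorem~\ref{prop:two} through the Peano-kernel expansion of Lemma~\ref{prop:dd_my_spline}, peel off a boundary term equal to $\tau(V^p)f^{(p)}(a)$, and then apply Fubini's theorem to interchange the spatial and the $t$-integrations. Proving the left-hand side equals each of \eqref{f-la:finite5} and \eqref{f-la:finite5'} separately yields the full chain of equalities.

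For the first equality I would begin with \eqref{f-la:two}. Because the spectral measure $E_{H_0}$ is supported in $\sigma(H_0)\subseteq[a,b]$, the measure $m_{p,H_0,V}^{(1)}$ lives on $[a,b]^{p+1}$, so the integral in \eqref{f-la:two} may be restricted to $[a,b]^{p+1}$, and on this cube each divided difference is expanded via Lemma~\ref{prop:dd_my_spline}. The one point needing care is that Lemma~\ref{prop:dd_my_spline} is stated with $a,b$ equal to $\min_k\la_k,\max_k\la_k$, whereas here $[a,b]$ is fixed. I would reconcile this using Proposition~\ref{prop:02}: the kernel $\dd{\la_{p+1}}{p}{(\la-t)_+^p}$ equals $1$ for $t<\min_k\la_k$ and $0$ for $t\geq\max_k\la_k$, so enlarging the integration limits to the fixed $a,b$ and replacing the boundary value $f^{(p)}(\min_k\la_k)$ by $f^{(p)}(a)$ changes nothing, since the discrepancy $\frac1{p!}\bigl(f^{(p)}(\min_k\la_k)-f^{(p)}(a)\bigr)=\frac1{p!}\int_a^{\min_k\la_k}f^{(p+1)}(t)\,dt$ is absorbed exactly because the kernel is identically $1$ on that range. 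Thus on $[a,b]^{p+1}$,
\[\dd{\la_{p+1}}{p}{f}=\frac1{p!}f^{(p)}(a)+\frac1{p!}\int_a^b f^{(p+1)}(t)\,\dd{\la_{p+1}}{p}{(\la-t)_+^p}\,dt.\]

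Substituting this into \eqref{f-la:two} splits the integral into a constant term and a kernel term. For the constant term, $E_{H_0}([a,b])=I$ gives $m_{p,H_0,V}^{(1)}([a,b]^{p+1})=\tau(V^p)$, so it equals $\tau(V^p)f^{(p)}(a)$, which I move to the left-hand side. For the kernel term I apply Fubini's theorem to interchange the $t$-integral with the integral against $m_{p,H_0,V}^{(1)}$, producing \eqref{f-la:finite5}; this is legitimate since $m_{p,H_0,V}^{(1)}$ is finite, the kernel takes values in $[0,1]$ by Proposition~\ref{prop:02}, and $f^{(p+1)}$ is integrable on $[a,b]$. The second equality is proved the same way, now starting from \eqref{f-la:two'} and applying Lemma~\ref{prop:dd_my_spline} at order $p-1$ to $f'$: since $(f')^{(p-1)}=f^{(p)}$ and $(f')^{(p)}=f^{(p+1)}$, the expansion of $\De_{\la_1,\dots,\la_p}^{(p-1)}(f')$ carries the boundary value $\frac1{(p-1)!}f^{(p)}(a)$ and the kernel $\dd{\la_p}{p-1}{(\la-t)_+^{p-1}}$, while $m_{p,H_0,V}([a,b]^p)=\tau(V^p)$.

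The main obstacle is precisely the interval-matching described above: Lemma~\ref{prop:dd_my_spline} produces a boundary term anchored at the $\la$-dependent endpoint $\min_k\la_k$, and one must verify that it can be uniformly anchored at the fixed $a$, so that the resulting constant factors out of the $m$-integral as the single number $\tau(V^p)f^{(p)}(a)$. Proposition~\ref{prop:02} is exactly what makes this work. The only remaining thing to check is the integrability required for Fubini, which is routine given the finiteness of the multiple spectral measures and the uniform boundedness of the Peano kernels.
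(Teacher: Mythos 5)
Your proposal is correct and follows essentially the same route as the paper's proof: expand the integrands of \eqref{f-la:two} and \eqref{f-la:two'} via the Peano-kernel formula \eqref{f-la:finite4} of Lemma \ref{prop:dd_my_spline}, use $m_{p,H_0,V}^{(1)}(\Reals^{p+1})=m_{p,H_0,V}(\Reals^p)=\tau(V^p)$ to extract the term $\tau(V^p)f^{(p)}(a)$, and interchange the $t$-integration with the integration against the finite multiple spectral measures by Fubini's theorem. Your explicit verification, via Proposition \ref{prop:02}, that the knot-dependent anchor point $\min_k\la_k$ in Lemma \ref{prop:dd_my_spline} can be moved to the fixed endpoint $a$ is a detail the paper leaves implicit, and you handle it correctly.
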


\begin{proof}
First, note that the measure $m_{p,H_0,V}$ is supported in $[a,b]^p$.
Expanding the integrands in \eqref{f-la:two} and \eqref{f-la:two'} according to \eqref{f-la:finite4} of Lemma \ref{prop:dd_my_spline} and then using Fubini's theorem (the functions $f^{(p+1)}(\cdot)$, $\dd{\la_{p+1}}{p}{(\la-\cdot)_+^{p}}$, and $\dd{\la_p}{p-1}{(\la-\cdot)_+^{p-1}}$ are bounded and the measures $m_{p,H_0,V}^{(1)}$ and $m_{p,H_0,V}$ are finite) provide the representations \eqref{f-la:finite5} and
\eqref{f-la:finite5'}. Here we used the fact that $m_{p,H_0,V}^{(1)}(\Reals^{p+1})=m_{p,H_0,V}(\Reals^p)=\tau(V^p)$.
\end{proof}

The order of an operator derivative inside a trace can be decreased by means of increasing the order of a scalar derivative.

\begin{cor}\label{prop:chain}
Assume Hypothesis \ref{hyp}. Then for $f\in\mathcal{W}_p(\Reals)$,
\begin{align*}
\tau\left[\frac{d^p}{dx^p}\bigg|_{x=0}f(H_0+xV)\right]=
\tau\left[V\frac{d^{p-1}}{dx^{p-1}}\bigg|_{x=0}f'(H_0+xV)\right].
\end{align*}
\end{cor}

\begin{proof}
By Lemma \ref{prop:one} \eqref{f-la:one''} and Proposition \ref{prop:der},
\[(p-1)!\int_{\Reals^p}\De_{\la_1,\dots,\la_p}^{(p-1)}(f')\,
dm_{p,H_0,V}(\la_1,\dots,\la_p)=\tau\left[V\frac{d^{p-1}}{dx^{p-1}}\bigg|_{x=0}f'(H_0+xV)\right],\]
which along with Theorem \ref{prop:two} completes the proof.
\end{proof}

\begin{remark}
The assertions of Corollaries \ref{prop:two_cor} and \ref{prop:chain} remain true if $p=1$, provided $V\in\ncs{1}$, and if $p=2$.
\end{remark}

\section{Properties of the spectral shift measure}

\label{sec:proofs}

In this section, we prove existence of the higher order spectral shift functions and derive some of their properties by implementing a multiple operator integral approach.

\begin{thm}\label{prop:finite}
Assume Hypothesis \ref{hyp}. Assume, in addition, that $H_0$ is bounded.
\begin{enumerate}
\item There exists a unique finite real-valued absolutely continuous measure $\nu_p$ such that the trace formula
\begin{equation}
\label{f-la:tr}\tau[R_{p}(f)]=\int_\Reals f^{(p)}(t)d\nu_p(t)
\end{equation}
holds for $f\in\mathcal{W}_p(\Reals)\cup\Rfr$, where $\Rfr$ denotes the set of rational functions on $\Reals$ with nonreal poles.
\item The density of $\nu_p$ is given by the formulas
\begin{align}
\nonumber
\eta_p(t)&=\frac{\tau(V^{p-1})}{(p-1)!}-\nu_{p-1}((-\infty,t))\\
\label{f-la:nup1}
&\quad-\frac{1}{(p-1)!}\int_{\Reals^p}\dd{\la_p}{p-1}{(\la-t)_+^{p-1}}\,
dm_{p-1,H_0,V}^{(1)}(\la_1,\dots,\la_{p})\\
\nonumber
&=\frac{\tau(V^{p-1})}{(p-1)!}-\nu_{p-1}((-\infty,t))\\
\label{f-la:nup2}
&\quad-\frac{1}{(p-1)!}\int_{\Reals^{p-1}}\dd{\la_{p-1}}{p-2}{(\la-t)_+^{p-2}}\,
dm_{p-1,H_0,V}(\la_1,\dots,\la_{p-1}).
\end{align}
\item The measure $\nu_p$ is supported in the convex hull of the set $\sigma(H_0)\cup\sigma(H_0+V)$ and $\nu_p(\Reals)=\frac{\tau(V^p)}{p!}$.
\end{enumerate}
\end{thm}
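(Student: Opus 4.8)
The plan is to prove all three parts simultaneously by induction on $p$, exploiting the elementary recursion
\[
R_p(f)=R_{p-1}(f)-\frac{1}{(p-1)!}\frac{d^{p-1}}{dx^{p-1}}\bigg|_{x=0}f(H_0+xV),
\]
which is immediate from the definition of the Taylor remainder. The base case $p=3$ rests on the $p=2$ theory: Koplienko's formula \eqref{f-la:5} supplies $\nu_2$, which is known to be finite, real, absolutely continuous, supported in the convex hull of $\sigma(H_0)\cup\sigma(H_0+V)$, and of total mass $\tau(V^2)/2$. Taking the trace of the recursion and inserting the inductive hypothesis $\tau[R_{p-1}(f)]=\int_\Reals f^{(p-1)}(t)\,\eta_{p-1}(t)\,dt$ reduces matters to the trace of the $(p-1)$-st derivative, which I would rewrite by the first representation of Theorem \ref{prop:two} (valid at order $p-1$ as well, since the requisite multiple spectral measures $m_{p-1,H_0,V}^{(1)}$ and $m_{p-1,H_0,V}$ exist under the standing assumptions) as $(p-1)!\int\dd{\la_p}{p-1}{f}\,dm_{p-1,H_0,V}^{(1)}$.

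Next I would apply Lemma \ref{prop:dd_my_spline} (with $p$ replaced by $p-1$) to write $\dd{\la_p}{p-1}{f}=\tfrac{1}{(p-1)!}f^{(p-1)}(a)+\tfrac{1}{(p-1)!}\int_a^b f^{(p)}(t)\dd{\la_p}{p-1}{(\la-t)_+^{p-1}}\,dt$, so that, after Fubini and the normalization $m_{p-1,H_0,V}^{(1)}(\Reals^{p})=\tau(V^{p-1})$, the derivative term becomes $\tfrac{\tau(V^{p-1})}{(p-1)!}f^{(p-1)}(a)$ plus an integral of $f^{(p)}$ against the bounded kernel $t\mapsto\int\dd{\la_p}{p-1}{(\la-t)_+^{p-1}}\,dm_{p-1,H_0,V}^{(1)}$. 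The remaining task is to convert $\int_a^b f^{(p-1)}\eta_{p-1}$ together with the constant $\tfrac{\tau(V^{p-1})}{(p-1)!}f^{(p-1)}(a)$ into integrals of $f^{(p)}$. Integrating $\int_a^b f^{(p-1)}\eta_{p-1}$ by parts produces the boundary value $\tfrac{\tau(V^{p-1})}{(p-1)!}f^{(p-1)}(b)$ (using that $\eta_{p-1}$ is supported in $[a,b]$ with total mass $\tau(V^{p-1})/(p-1)!$) and $-\int_a^b f^{(p)}\,\nu_{p-1}((-\infty,t))\,dt$; the two boundary terms merge via $f^{(p-1)}(b)-f^{(p-1)}(a)=\int_a^b f^{(p)}$. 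Collecting terms yields $\tau[R_p(f)]=\int_a^b f^{(p)}(t)\eta_p(t)\,dt$ with $\eta_p$ exactly as in \eqref{f-la:nup1}. This simultaneously gives existence (set $\nu_p:=\eta_p\,dt$, finite and absolutely continuous because $\eta_p$ is bounded and, as shown next, compactly supported) and the density formula; realness of $\eta_p$ follows since $\tau[R_p(f)]\in\Reals$ for real $f$ forces $\im\eta_p=0$ almost everywhere. Formula \eqref{f-la:nup2} is obtained identically, using instead the second representation \eqref{f-la:two'} of Theorem \ref{prop:two} and Lemma \ref{prop:dd_my_spline} applied to $f'$.

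For part (3) the support statement I would read off \eqref{f-la:nup1} directly: by Proposition \ref{prop:02} the antiderivative spline $\dd{\la_p}{p-1}{(\la-t)_+^{p-1}}$ equals $1$ for $t$ below all knots and $0$ for $t$ above them, so for $t<\min(\sigma(H_0)\cup\sigma(H_0+V))$ the kernel integrates to $\tau(V^{p-1})$ while $\nu_{p-1}((-\infty,t))=0$, whence $\eta_p(t)=0$; symmetrically $\eta_p(t)=0$ above the convex hull. For the total mass I would feed the polynomial $f(\la)=\la^p/p!$ into the trace formula: a direct expansion of $(H_0+xV)^p$ shows that subtracting the Taylor terms of order $0,\dots,p-1$ leaves precisely the all-$V$ word, so $R_p(\la^p)=V^p$ and $\tau[R_p(\la^p/p!)]=\tau(V^p)/p!$, while $f^{(p)}\equiv1$ gives $\int f^{(p)}\,d\nu_p=\nu_p(\Reals)$.

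To legitimize this last step and to pass from $\mathcal{W}_p(\Reals)$ to $\Rfr$, I would use a localization: since $H_0$, $H_0+V$ and $H_x$ all have spectra in a fixed compact $[a,b]$ and $\nu_p$ is supported there, both sides of \eqref{f-la:tr} depend only on the germ of $f$ on a neighborhood of $[a,b]$; given any $f$ smooth on $\Reals$ (in particular any $f\in\Rfr$, whose poles are nonreal, or the polynomial $\la^p$), multiplying by a cutoff equal to $1$ near $[a,b]$ produces a Schwartz function $g\in\mathcal{W}_p(\Reals)$ with the same germ, so the established formula for $g$ transfers to $f$. Uniqueness of $\nu_p$ follows because $\{f^{(p)}:f\in\mathcal{W}_p(\Reals)\}$ contains every $e^{\i t\la}$ with $t\neq0$, so two finite measures obeying \eqref{f-la:tr} have equal Fourier transforms off the origin and hence coincide. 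I expect the main obstacle to be the bookkeeping in the integration-by-parts step: one must track the constant $f^{(p-1)}(a)$ furnished by Lemma \ref{prop:dd_my_spline} alongside the boundary terms of $\int f^{(p-1)}\eta_{p-1}$ and check that they telescope into $\int_a^b f^{(p)}$ with nothing left over, which is exactly what forces the specific combination $\tfrac{\tau(V^{p-1})}{(p-1)!}-\nu_{p-1}((-\infty,t))$ appearing in \eqref{f-la:nup1}.
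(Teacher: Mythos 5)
Your proposal is correct and follows essentially the same route as the paper's own proof: induction on $p$ from the known $p=2$ case, the remainder recursion $R_p(f)=R_{p-1}(f)-\frac{1}{(p-1)!}\frac{d^{p-1}}{dx^{p-1}}\big|_{x=0}f(H_0+xV)$, the spline representations of Theorem \ref{prop:two} combined with Lemma \ref{prop:dd_my_spline} (which is exactly Corollary \ref{prop:two_cor}, here re-derived inline), integration by parts against $\eta_{p-1}$, support via Proposition \ref{prop:02}, total mass via $f=t^p$, and the same localization-by-germ argument to reach polynomials and $\Rfr$. Your added details on uniqueness (Fourier transforms of finite measures, using $f^{(p)}=e^{\i t\la}$ for $t\neq 0$) and on realness of $\eta_p$ are sound elaborations of points the paper leaves implicit, not a different method.
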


\begin{remark}
Theorem \ref{prop:finite}, except for the representation \eqref{f-la:nup1}, was originally proved in \cite[Theorem 5.1, Theorem 5.2, and Theorem 5.6]{ds}. We provide a shorter proof.
\end{remark}

\begin{proof}[Proof of Theorem \ref{prop:finite}]
The proof can be accomplished by induction. The result is known to hold for $p=2$ (see \cite{ds,Kop84}). Assume that the theorem holds when $p$ is replaced with $p-1$. Let $[a,b]$ be a segment containing $\sigma(H_0)\cup\sigma(H_0+V)$. Then $\eta_{p-1}$ is supported in $[a,b]$. Clearly,
\begin{align}
\label{f-la:finite1}
\tau\left[R_{p}(f)\right]=\tau\left[R_{p-1}(f)\right]-
\frac{1}{(p-1)!}\tau\left[\frac{d^{p-1}}{dx^{p-1}}\bigg|_{x=0}f(H_0+xV)\right].
\end{align}

Let $f\in\mathcal{W}_p(\Reals)$. By the induction hypothesis and the representation \eqref{f-la:finite5} of Corollary \ref{prop:two_cor}, the  expression in \eqref{f-la:finite1} equals
\begin{align}
\label{f-la:finite2}
\nonumber
&\int_a^b f^{(p-1)}(t)\eta_{p-1}(t)\,dt-\frac{\tau(V^{p-1})}{(p-1)!}f^{(p-1)}(a)\\
&\quad-\frac{1}{(p-1)!}\int_a^b f^{(p)}(t)\int_{[a,b]^p}\dd{\la_p}{p-1}{(\la-t)_+^{p-1}}\,
dm_{p-1,H_0,V}^{(1)}(\la_1,\dots,\la_p)\,dt.
\end{align}
Integrating by parts in the first integral in \eqref{f-la:finite2} gives
\begin{align}
\nonumber
\int_{[a,b]} f^{(p-1)}(t)\eta_{p-1}(t)\,dt&=
\left(f^{(p-1)}(t)\int_a^t\eta_{p-1}(s)\,ds\right)\bigg|_a^b-\int_a^b f^{(p)}(t)\left(\int_a^t\eta_{p-1}(s)\,ds\right)\,dt\\
\label{f-la:finite3}
&=f^{(p-1)}(b)\frac{\tau(V^{p-1})}{(p-1)!}-\int_a^b f^{(p)}(t)\left(\int_a^t\eta_{p-1}(s)\,ds\right)\,dt.
\end{align}
Combining \eqref{f-la:finite1} - \eqref{f-la:finite3} implies
\begin{align*}
&\tau\left[R_{p}(f)\right]=\left(f^{(p-1)}(b)-f^{(p-1)}(a)\right)
\frac{\tau(V^{p-1})}{(p-1)!}-\int_a^b f^{(p)}(t)\int_a^t\eta_{p-1}(s)\,ds\,dt\\
&\quad
-\int_a^b f^{(p)}(t)\frac{1}{(p-1)!}\int_{[a,b]^p}\dd{\la_p}{p-1}{(\la-t)_+^{p-1}}\,
dm_{p-1,H_0,V}^{(1)}(\la_1,\dots,\la_p)\,dt\\
&=\int_a^b f^{(p)}(t)\bigg(\frac{\tau(V^{p-1})}{(p-1)!}-\nu_{p-1}((a,t))\\
&\quad
-\frac{1}{(p-1)!}\int_{\Reals^p}\dd{\la_p}{p-1}{(\la-t)_+^{p-1}}\,
dm_{p-1,H_0,V}^{(1)}(\la_1,\dots,\la_p)\bigg)\,dt,
\end{align*}
from what the trace formula \eqref{f-la:tr} follows for $f\in\mathcal{W}_p(\Reals)$, with
\begin{align*}
\eta_p(t)&=\frac{\tau(V^{p-1})}{(p-1)!}-\nu_{p-1}((a,t))\\
&\quad-\frac{1}{(p-1)!}\int_{\Reals^p}\dd{\la_p}{p-1}{(\la-t)_+^{p-1}}\,
dm_{p-1,H_0,V}^{(1)}(\la_1,\dots,\la_p).
\end{align*}
Let $[c,d]$ denote the convex hull of $\sigma(H_0)\cup\sigma(H_0+V)$.
By the induction hypothesis, $\nu_{p-1}((-\infty,c))=\nu_{p-1}((d,\infty))=0$ and $\nu_{p-1}([c,d])=\frac{\tau(V^{p-1})}{(p-1)!}$. By Proposition \ref{prop:02},
\begin{align*}
&\int_{\Reals^p}\dd{\la_p}{p-1}{(\la-t)_+^{p-1}}\,dm_{p-1,H_0,V}^{(1)}(\la_1,\dots,\la_p)\\
&\quad=
\begin{cases}
m_{p-1,H_0,V}^{(1)}(\Reals^p)=\frac{\tau(V^{p-1})}{(p-1)!} & \text{ if } t<c\\
0 & \text{ if } t>d
\end{cases}.
\end{align*}
Therefore, \eqref{f-la:nup1} holds and the measure $\nu_p$ is supported in $[c,d]$. To extend
the trace formula \eqref{f-la:tr} to $f$ a polynomial, we apply \eqref{f-la:tr} to a function $g\in\mathcal{W}_p(\Reals)$, which coincides with $f$ on a segment containing $\cup_{x\in[-1,1]}\sigma(H_0+xV)$, and get
\begin{align*}
\tau[R_{p}(f)]=\tau[R_{p}(g)]=\int_\Reals g^{(p)}\,d\nu_p(t)=\int_\Reals f^{(p)}(t)\,d\nu_p(t).
\end{align*} To obtain the equality $\nu_p(\Reals)=\frac{\tau(V^p)}{p!}$, we apply \eqref{f-la:tr} to $f(t)=t^p$. The measure $\nu_p$ is finite since it is compactly supported and its density is bounded.

The proof of \eqref{f-la:nup2} is completely analogous to the proof of \eqref{f-la:nup1}, where the only difference consists in applying \eqref{f-la:finite5'} (instead of \eqref{f-la:finite5}) to the second summand in \eqref{f-la:finite1}.
\end{proof}

The techniques used in the proof of Theorem \ref{prop:finite} also work in the case of an unbounded operator $H_0$, provided $f^{(p)}\in L^1(\Reals)$ and $\nu_{p-1}$ is known to be finite.

\begin{thm}\label{prop:infinite}
Let $H_0=H_0^*$ be an operator affiliated with $\Mcal$, $V=V^*$ an operator in $\ncs{2}$ and $p=3$. Then for $f\in C_c^p(\Reals)\cup\Rfr_b$, where $\Rfr_b$ is the subset of bounded functions in $\Rfr$, the representations \eqref{f-la:tr} - \eqref{f-la:nup2} hold.
\end{thm}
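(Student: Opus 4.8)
The plan is to carry out the single inductive step of the proof of Theorem~\ref{prop:finite} from $p-1=2$ to $p=3$, replacing each use of the boundedness of $H_0$ by control of the integrands and of the boundary values at $\pm\infty$; the two hypotheses highlighted in the preceding remark, namely $f'''\in L^1(\Reals)$ and the finiteness of $\nu_2$, are exactly what make the limiting manipulations legitimate. First I would record the two unbounded-case inputs. Koplienko's formula \eqref{f-la:5} is valid for $H_0=H_0^*$ merely affiliated with $\Mcal$ and $V=V^*\in\ncs{2}$, so that $\tau[R_2(f)]=\int_\Reals f''(t)\eta_2(t)\,dt$ with $\eta_2\in L^1(\Reals)$ and $\nu_2(\Reals)=\tau(V^2)/2$. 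For the second derivative I would observe that, although Theorem~\ref{prop:two} is stated for $p\ge 3$, its $p=2$ instance rests only on Proposition~\ref{prop:der}, Lemma~\ref{prop:one}, and the existence of $m_{2,H_0,V}$ and $m_{2,H_0,V}^{(1)}$, all of which hold for a general $\Mcal$ and an unbounded $H_0$ (Proposition~\ref{prop:m} and Proposition~\ref{prop:m1} with $p=2$); hence
\[
\tau\!\left[\frac{d^2}{dx^2}\Big|_{x=0}f(H_0+xV)\right]=2!\int_{\Reals^3}\Delta^{(2)}_{\la_1,\la_2,\la_3}(f)\,dm_{2,H_0,V}^{(1)}(\la_1,\la_2,\la_3).
\]

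The next step is the unbounded analogue of Corollary~\ref{prop:two_cor}. Starting from Lemma~\ref{prop:dd_my_spline} with $[a,b]=[\min_k\la_k,\max_k\la_k]$ and using Proposition~\ref{prop:02} (the antiderivative spline equals $1$ for $t<\min_k\la_k$ and $0$ for $t\ge\max_k\la_k$), the boundary value $\frac{1}{2!}f''(a)$ is absorbed, via $\int_{-\infty}^a f'''(t)\,dt=f''(a)-f''(-\infty)=f''(a)$, into the single integral
\[
\Delta^{(2)}_{\la_1,\la_2,\la_3}(f)=\frac{1}{2!}\int_{-\infty}^{\infty}f'''(t)\,\Delta^{(2)}_{\la_1,\la_2,\la_3}\bigl((\la-t)_+^{2}\bigr)\,dt,
\]
valid for every $(\la_1,\la_2,\la_3)\in\Reals^3$ as soon as $f''(-\infty)=0$ and $f'''\in L^1(\Reals)$. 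Integrating against the finite measure $m_{2,H_0,V}^{(1)}$ and applying Fubini (the inner spline lies in $[0,1]$ by Proposition~\ref{prop:02}, $f'''\in L^1$, and $|m_{2,H_0,V}^{(1)}|(\Reals^3)\le\tau(V^2)$) yields
\[
\tau\!\left[\frac{d^2}{dx^2}\Big|_{x=0}f(H_0+xV)\right]=\int_{-\infty}^{\infty}f'''(t)\,G(t)\,dt,\qquad G(t)=\int_{\Reals^3}\Delta^{(2)}_{\la_1,\la_2,\la_3}\bigl((\la-t)_+^{2}\bigr)\,dm_{2,H_0,V}^{(1)},
\]
where $G$ is bounded with $G(-\infty)=\tau(V^2)$ and $G(+\infty)=0$.

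I would then assemble the formula through $\tau[R_3(f)]=\tau[R_2(f)]-\frac{1}{2!}\tau\!\left[\frac{d^2}{dx^2}\big|_{0}f(H_0+xV)\right]$. Writing $\nu_2((-\infty,t))=\int_{-\infty}^t\eta_2(s)\,ds$ and integrating $\int_\Reals f''(t)\eta_2(t)\,dt$ by parts over $\Reals$, the boundary term $\bigl[f''(t)\,\nu_2((-\infty,t))\bigr]_{-\infty}^{\infty}$ vanishes because $\nu_2$ is finite (so $\nu_2((-\infty,\cdot))$ is bounded with limits $0$ and $\tau(V^2)/2$) while $f''(\pm\infty)=0$; thus $\int_\Reals f''(t)\eta_2(t)\,dt=-\int_\Reals f'''(t)\,\nu_2((-\infty,t))\,dt$. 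Combining the two displays gives $\tau[R_3(f)]=\int_\Reals f'''(t)\eta_3(t)\,dt$ with $\eta_3(t)=-\nu_2((-\infty,t))-\frac{1}{2}G(t)$. Since $\int_\Reals f'''(t)\,dt=f''(\infty)-f''(-\infty)=0$, adding the constant $\tau(V^2)/2$ to the density leaves the pairing unchanged, and this choice reproduces precisely \eqref{f-la:nup1} (starting instead from \eqref{f-la:finite5'} and $m_{2,H_0,V}$ gives the twin formula \eqref{f-la:nup2}). It is the unique normalization for which $\eta_3(\pm\infty)=0$; the resulting $\eta_3$ is bounded, so $\nu_3=\eta_3\,dt$ is absolutely continuous and $\int_\Reals f'''\eta_3\,dt$ converges absolutely because $f'''\in L^1$.

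It remains to secure the two function classes. For $f\in C_c^3(\Reals)$ one has $f''(\pm\infty)=0$ and $f'''\in C_c(\Reals)\subset L^1(\Reals)$, but $C_c^3(\Reals)\not\subset\mathcal{W}_3(\Reals)$, so I would first prove the formula for $f\in C_c^\infty(\Reals)\subset\mathcal{W}_3(\Reals)$ as above and then pass to general $f\in C_c^3(\Reals)$ by mollification, approximating in the $C^3$-norm with supports in a fixed compact set: the right-hand side converges since $\eta_3$ is bounded, and the left-hand side converges by the a~priori bound $|\tau[R_3(g)]|\le\|\eta_3\|_\infty\,\|g'''\|_{L^1}$ just obtained, together with the standard continuity of $f\mapsto\tau[R_3(f)]$ in the $C^3$ topology furnished by the trace-class estimates for the relevant multiple operator integrals. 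For $f\in\Rfr_b$ the decay $f''(x)=O(x^{-3})$ and $f'''(x)=O(x^{-4})$ again yields $f''(\pm\infty)=0$ and $f'''\in L^1(\Reals)$, so the same computation applies once the $p=2$ trace identity is known on resolvents; this I would obtain from the explicit expansion $\frac{d^2}{dx^2}\big|_{0}(H_0+xV-z)^{-1}=2\,(H_0-z)^{-1}V(H_0-z)^{-1}V(H_0-z)^{-1}$ (trace class since $V\in\ncs 2$), matched term-by-term with the multiple operator integral, and extend by linearity to all of $\Rfr_b$. The \emph{main obstacle} is exactly the verification that every boundary contribution at $\pm\infty$ vanishes and that Fubini and the two integrations by parts are justified in the absence of compact support of the spectral measures; this is precisely what the hypotheses $f'''\in L^1(\Reals)$ and $\nu_2$ finite are designed to control, which is why they are singled out in the remark preceding the theorem.
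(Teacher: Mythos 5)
Your core computation is the paper's own, and you have correctly isolated the two points that make the unbounded case work. The paper's proof is exactly the limiting procedure you describe: let $a\to-\infty$, $b\to\infty$ in \eqref{f-la:finite3} and in \eqref{f-la:finite4} (so that the boundary term $\frac{1}{(p-1)!}f^{(p-1)}(a)$ disappears and the spline identity holds over all of $\Reals$), justify Fubini by the boundedness of the spline antiderivative and the finiteness of $m_{2,H_0,V}^{(1)}$, and reinstate the constant $\frac{\tau(V^2)}{2}$ in the density using $\int_\Reals f'''(t)\,dt=0$ — this last step is precisely the paper's display \eqref{f-la:infinite3}. Your observation that only the order-$2$ instance of Theorem \ref{prop:two} / Corollary \ref{prop:two_cor} is needed, and that it survives for a general semifinite $\Mcal$ and unbounded $H_0$ because $m_{2,H_0,V}$ and $m_{2,H_0,V}^{(1)}$ exist by Propositions \ref{prop:m} and \ref{prop:m1} with $p=2$, is exactly the reading the paper relies on (its \eqref{f-la:infinite2}--\eqref{f-la:infinite2'}), as is the appeal to the integrability of $\eta_2$ to make $\int_{-\infty}^t\eta_2(s)\,ds$ well defined.

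The genuine gap is your final paragraph on the function classes. The a priori bound $|\tau[R_3(g)]|\le\norm{\eta_3}_\infty\norm{g'''}_{L^1}$ is available only for $g$ in the class where the trace formula has already been proved, so invoking it for $g=f_n-f$ with $f\in C_c^3(\Reals)\setminus C_c^\infty(\Reals)$ is circular; and the ``standard continuity of $f\mapsto\tau[R_3(f)]$ in the $C^3$ topology'' does not exist in this generality — the known trace-norm estimates for $\frac{d^2}{dx^2}f(H_x)$ and for remainders in the semifinite, unbounded setting are in terms of the total variations $\norm{\mu_{f^{(j)}}}$ rather than sup-norms of derivatives, and an estimate of $\tau[R_3(\cdot)]$ by $\norm{f'''}_\infty$ is essentially the content of the theorem being proved. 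Fortunately the detour is unnecessary: your argument only differentiates twice, so it requires $f\in\mathcal{W}_2(\Reals)$, not $f\in\mathcal{W}_3(\Reals)$, and $C_c^3(\Reals)\subset\mathcal{W}_2(\Reals)$ — each of $f,f',f''$ lies in $C_c^1(\Reals)$, hence is a compactly supported function in the Sobolev space $H^1$, whose Fourier transform is integrable by Cauchy--Schwarz. Likewise $\Rfr_b\subset\mathcal{W}_p(\Reals)$ for every $p$ (by partial fractions, $(x-z)^{-k}$ with $\im z\neq 0$ is the Fourier transform of an $L^1$-function, and a constant is the transform of a point mass), so your separate resolvent expansion and extension by linearity are redundant. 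With these two inclusions your direct computation applies verbatim to all $f\in C_c^3(\Reals)\cup\Rfr_b$, which is how the paper's (sketched) proof proceeds — it contains no approximation step at all.
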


\begin{proof}
The proof is very similar to the one of Theorem \ref{prop:finite}, so we provide only a brief sketch. Clearly,
\begin{align}
\label{f-la:infinite3}
0&=\lim_{a\rightarrow -\infty,\,b\rightarrow\infty}\left(f^{(p-1)}(b)-f^{(p-1)}(a)\right)\frac{\tau(V^{p-1})}{(p-1)!}
=\int_\Reals f^{(p)}(t)\frac{\tau(V^{p-1})}{(p-1)!}\,dt.
\end{align}
By letting $a\rightarrow -\infty$ and $b\rightarrow\infty$ in \eqref{f-la:finite3}, we obtain
\begin{align}
\label{f-la:infinite1}
\int_\Reals f^{(p-1)}(t)\eta_{p-1}(t)dt=-\int_\Reals f^{(p)}(t)\left(\int_{-\infty}^t\eta_{p-1}(s)\,ds\right)\,dt.
\end{align}
By letting $a\rightarrow -\infty$ and $b\rightarrow\infty$ in \eqref{f-la:finite4}, we obtain
\begin{align*}
\dd{\la_{p}}{p-1}{f}
=\frac{1}{(p-1)!}\int_\Reals f^{(p)}(t)\dd{\la_p}{p-1}{(\la-t)_+^{p-1}}\,dt
\end{align*} and, subsequently,
\begin{align}
\nonumber
&\tau\left[\frac{d^{p-1}}{dx^{p-1}}\bigg|_{x=0}f(H_0+xV)\right]\\
\label{f-la:infinite2}
&\quad=\int_\Reals f^{(p)}(t)\int_{\Reals^{p}}\dd{\la_{p}}{p-1}{(\la-t)_+^{p-1}}\,
dm_{p-1,H_0,V}^{(1)}(\la_1,\dots,\la_{p})\,dt\\
\label{f-la:infinite2'}
&\quad=\int_\Reals f^{(p)}(t)\int_{\Reals^{p-1}}\dd{\la_{p-1}}{p-2}{(\la-t)_+^{p-2}}\,
dm_{p-1,H_0,V}(\la_1,\dots,\la_{p-1})\,dt
\end{align} (see the proof of Corollary \ref{prop:two_cor}).

We note that the integral $\int_{-\infty}^t\eta_{p-1}(s)\,ds$ is well defined since $\eta_{p-1}=\eta_2$ is integrable (see, e.g.,  discussion in the introductory section of \cite{unbdds}).
Combining \eqref{f-la:infinite3} - \eqref{f-la:infinite2'}, as it was done in the case of a bounded $H_0$, completes the proof.
\end{proof}

\begin{remark}\label{rem:unbdd_det}
The trace formula \eqref{f-la:tr} for $f\in C_c^\infty(\Reals)$ (in fact, $f\in C_c^{p+1}(\Reals)$ also works) was obtained in \cite[Theorem 5.2]{ds}, without establishing the absolute continuity of the measure $\nu_3$ when $H_0$ is unbounded. The trace formula
\begin{align}
\label{f-la:treta3}
\tau\left[R_{3}(f)\right]=\int_\Reals f^{'''}(t)\eta_3(t)\,dt,
\end{align} for an unbounded $H_0$,
with $\eta_3$ given by \eqref{f-la:nup2}, was proved in \cite[Theorem 4.1]{unbdds} only
for $f\in\Rfr_b$. The results of Theorem \ref{prop:two} have allowed to obtain \eqref{f-la:treta3} for both $f\in C_c^p(\Reals)$ and $f\in\Rfr_b$. The same approach proves existence of the spectral shift function of order $p\geq 3$ for an unbounded $H_0$, when $\Mcal=\BH$ (the original proofs in \cite{ds,unbdds} were based on the analysis of the Cauchy transform of the measure $\nu_p$).
A substantial obstacle in establishing \eqref{f-la:tr} for $\tau(|V|^p)<\infty$, with $p>2$ (unless $\tau(|V|^2)<\infty$), is non-extendibility of the set function $m_{p,H_0,V}$ to a finite countably additive measure on $\Reals^p$ (see a counterexample in \cite[Section 4]{ds}). An analogous problem has caused a delay in establishing \eqref{f-la:treta3} in the von Neumann algebra setting; the set function $m_{3,H_0,V}$ can fail to extend to a finite measure even if $\tau$ is finite (and $\dim(\Hcal)=\infty$) \cite[Section 4]{ds}. That is why the approach of \cite{ds,unbdds} working for every Hilbert-Schmidt $V=V^*\in\Mcal=\BH$ was not so successful in the general von Neumann algebra setting.
\end{remark}

Below we provide an example, which demonstrates that the density $\eta_p$, with $p>3$, reflects information about the shift of the spectrum of an operator $H_0$ under a perturbation $V$, similar to the known case of $p=2$.

\medskip

{\bf Example.} Assume that $\tau$ is finite. Let $H_0=H_0^*$ and $V=V^*$ be commuting operators in $\Mcal$. Then we have the trace formula \eqref{f-la:tr} with an absolutely continuous measure $\nu_p\equiv\nu_{p,H_0,V}$, whose density is given by
\begin{align}\label{f-la:eta_comm1}
\eta_p(t)&=\frac{1}{(p-1)!}\tau\left[(H_0+V-tI)^{p-1}\left(E_{H_0}(t)-E_{H_0+V}(t)\right)\right]\\
\label{f-la:eta_comm2}
&=\frac{1}{(p-1)!}\tau\left[(H_0+V-tI)^{p-1}\left(E_{H_0}(t)E_{H_0+V}(t)^\perp-E_{H_0}(t)^\perp E_{H_0+V}(t)\right)\right].
\end{align}
Here $E_{H_0}(t)$ denotes the spectral projection $E_{H_0}((-\infty,t))$. If $H_0$ and $H_0+V$ are two commuting finite dimensional matrices with the eigenvalues $t_k^\circ$ and $t_k$,  respectively, and $\tau$ is the standard trace, then \eqref{f-la:eta_comm2} computes the net sum of signed powers of distances from $t$ to those eigenvalues $t_k$, which happen to be on the opposite side of $t$ with the eigenvalues $t_k^\circ$; the precise formula is
\[\eta_p(t)=\frac{1}{(p-1)!}\sum_{k\in\{k\st(t_k^\circ-t)(t_k-t)\leq
0\}}\big(\text{sign}({t_k-t})\big)(t_k-t)^{p-1}.\]

The representation \eqref{f-la:eta_comm2} follows directly from \eqref{f-la:eta_comm1} (see  \cite[Lemma 2.6]{convexity}). One can prove existence of an absolutely continuous measure $\nu_p$ satisfying \eqref{f-la:tr}, with the density given by \eqref{f-la:eta_comm1}, by induction on $p$. In the case of $p=1$ and $\Mcal$ the algebra of matrices on a finite dimensional Hilbert space, the formula \eqref{f-la:eta_comm1} is well-known and goes back to \cite{Lifshits} and, in the case of a general finite $\Mcal$, it is discussed in \cite{Azamov}.
The formula in the case of $p=2$ is due to \cite[Lemma 5.2]{convexity}.
To prove \eqref{f-la:tr} and \eqref{f-la:eta_comm1} for $p>3$, firstly we note that for $\phi$ representable in the form \eqref{f-la:dec},
\begin{align*}
&\int_{\Reals^{p+1}}\phi(\la_1,\la_2,\dots,\la_p,\la_{p+1})\,
dE_{H_1}(\la_1)V\,dE_{H_2}(\la_2)V\dots
dE_{H_p}(\la_p)V\,dE_{H_{p+1}}(\la_{p+1})\\
&\quad=V^p\int_S\al_1(H_1,s)\al_2(H_2,s)\dots\al_p(H_p,s)\,\al_{p+1}(H_{p+1},s)\,d\sigma(s).
\end{align*} Therefore, the formula \eqref{f-la:two} for the trace of an operator derivative rewrites as
\begin{align*}
\tau\left[\frac{d^p}{dx^p}\bigg|_{x=0}f(H_0+xV)\right]=\int_\Reals f^{(p)}(\la)\,d\tau[V^pE_{H_0}(\la)]=\tau[V^pf^{(p)}(H_0)],
\end{align*} for $f\in\mathcal{W}_p(\Reals)\cup\mathfrak{R}$, and hence,
\begin{align}\label{f-la:binomial0}
\tau[R_{p+1}(f)]=\tau[R_p(f)]-\frac{1}{p!}\tau[V^p f^{(p)}(H_0)].
\end{align}
We suppose that \eqref{f-la:tr} holds with $d\nu_p(t)=\eta_p(t)dt$, where $\eta_p$ is given by \eqref{f-la:eta_comm1}, and derive
\[\tau[R_{p+1}(f)]=\int_\Reals f^{(p+1)}(t)\eta_{p+1}(t)\,dt.\] Let $H=H_0+V$. By the binomial theorem we obtain
\begin{align}\label{f-la:binomial1}\nonumber
\frac{1}{p!}\tau[V^pf^{(p)}(H_0)]&=
\frac{1}{p!}\tau[(H-H_0)^pf^{(p)}(H_0)-(H-H)^pf^{(p)}(H)]\\
&=\frac{1}{p!}\sum_{k=0}^p \begin{pmatrix}p\\k\end{pmatrix}(-1)^k\tau[H^{p-k}H_0^kf^{(p)}(H_0)-H^{p-k}H^kf^{(p)}(H)],
\end{align}
which by the spectral theorem can be written as
\begin{align}\label{f-la:binomial2}\nonumber
&\frac{1}{p!}\sum_{k=0}^p\begin{pmatrix}p\\k\end{pmatrix}(-1)^k\tau\left[H^{p-k}\int_\Reals t^kf^{(p)}(t)\,d\left(E_{H_0}(t)-E_H(t)\right)\right]\\
&\quad=\frac{1}{p!}\sum_{k=0}^p\begin{pmatrix}p\\k\end{pmatrix}(-1)^k\int_\Reals t^kf^{(p)}(t)\,d\tau\left[H^{p-k}\left(E_{H_0}(t)-E_H(t)\right)\right].
\end{align}
Integrating by parts in \eqref{f-la:binomial2} gives
\begin{align}\label{f-la:binomial3}\nonumber
&\frac{1}{p!}\sum_{k=1}^p\begin{pmatrix}p\\k\end{pmatrix}(-1)^{k+1}k\int_\Reals t^{k-1}f^{(p)}(t)\tau\left[H^{p-k}\left(E_{H_0}(t)-E_H(t)\right)\right]\,dt\\
&\quad+\frac{1}{p!}\sum_{k=0}^p\begin{pmatrix}p\\k\end{pmatrix}(-1)^{k+1}\int_\Reals t^kf^{(p+1)}(t)\tau\left[H^{p-k}\left(E_{H_0}(t)-E_H(t)\right)\right]\,dt,
\end{align}
which by the binomial theorem can be written as
\begin{align}\label{f-la:binomial4}\nonumber
&\int_\Reals f^{(p)}(t)\frac{1}{(p-1)!}\tau\left[(H-tI)^{p-1}\left(E_{H_0}(t)-E_H(t)\right)\right]\,dt\\
&\quad-\int_\Reals f^{(p+1)}(t)\frac{1}{p!}\,d\tau\left[(H-tI)^p\left(E_{H_0}(t)-E_H(t)\right)\right]\,dt.
\end{align}
By the induction hypothesis, the first summand in \eqref{f-la:binomial4} equals
$\tau[R_p(f)]$. Thus, combining \eqref{f-la:binomial0} - \eqref{f-la:binomial4} completes the proof of \eqref{f-la:eta_comm1}.

\bibliographystyle{plain}

\end{document}